\newcommand{\Z}{\mathbb{Z}}
\newcommand{\R}{\mathbb{R}}
\newcommand{\C}{\mathbb{C}}
\newcommand{\V}{\mathbf{V}}
\newcommand{\A}{\mathbf{A}}
\newcommand{\gm}{\displaystyle}
\newcommand{\one}{\mathbf{1}}
\newcommand{\Png}[2]{
\centerline{\includegraphics[height=#1]{#2}}
}
\newcommand{\png}[4]{
\hspace{#1pt}\raisebox{#2pt}{\includegraphics[height=#3pt]{#4}}\hspace{#1pt}
}
\newtheorem{thm}{Theorem}[section]
\newtheorem{pro}[thm]{Proposition}
\newtheorem{lem}[thm]{Lemma}
\theoremstyle{remark}
\newtheorem{rem}[thm]{Remark}
\newtheorem{exo}[thm]{Exercise}
\newtheorem{exa}[thm]{Example}
\title {An oriented model for Khovanov homology
}
\author{Christian Blanchet}
\begin{document}
\begin{abstract}

We give an alternative presentation of Khovanov homology of links.
 The original construction rests on the Kauffman bracket model for
 the Jones polynomial, and the generators for the complex are  enhanced Kauffman
 states. Here we use an oriented $sl(2)$ state model allowing a natural definition
 of the boundary operator as twisted action of morphisms belonging to a TQFT
 for trivalent graphs and surfaces. Functoriality in original Khovanov homology holds up to sign.
 Variants of Khovanov homology fixing functoriality were obtained by Clark-Morrison-Walker \cite{CMW}
and also by Carmen Caprau \cite{CC}.
 Our construction is similar to those variants. 
 Here we work over integers, while the previous constructions were over gaussian integers, and produce the TQFT
by a universal construction. We consider diagrams in the oriented plane. Our functoriality results include  that for a fixed link the homology isomorphism associated with a sequence of Reidemeister moves between two diagrams is canonical.
\end{abstract}
\maketitle
\section{Trivalent TQFT}\label{s1}
\subsection{Frobenius algebra}
TQFTs for oriented surfaces are in one to one correspondence with commutative Frobenius algebras (also called symmetric
algebras)
\cite{Ko}. We consider here the Frobenius algebra $\A=\Z[X]/X^2\approx H^*(\C P^1)$, 
and we denote by  $V_\A$ the associated TQFT.
 The unit element in $\mathbf{A}$ is denoted by $\mathbf{1}$; the coalgebra structure $(\Delta,\epsilon)$ on 
 $\mathbf{A}$ is defined by 
 $$\epsilon(X)=1\ ,\ \epsilon(\mathbf{1})=0\ ;$$
 $$\Delta(X)=X\otimes X\ ,\ \Delta(\mathbf{1})=\mathbf{1}\otimes X + X\otimes\mathbf{1}\ .$$
The invariant of a closed surface is given below.
$$V_\A(S^1\times S^1)=\mathrm{rank}(\mathbf{A})=2\ ,$$
$$V_\A(\Sigma_g)=0 \text{ for a closed surface $\Sigma_g$ with genus $g\neq 1$.}$$
The TQFT is extended to surfaces with points. The neighbourhood of a point
represents the element $X$ in the algebra associated with the oriented circle
\mbox{$V_\A(S^1)=\mathbf{A}$}.
For a genus $g$ closed surface with $k$ points the invariant is zero excepted
\begin{verse}
for $(g,k)=(1,0)$ where the value is $2$, and\\
for $(g,k)=(0,1)$ where the value is $1$.
\end{verse}

\subsection{The universal construction}\label{universal}
One can reconstruct the above TQFT from the invariant of closed surfaces with points.
The TQFT module of an oriented curve $\gamma$ is generated over $\mathbb{Z}$ by surfaces with points
whose boundary is identified with $\gamma$. Relations are given by the kernel
of the bilinear form defined by gluing.
A key point in proving that the functor $V_A$ defined this way is indeed
a TQFT is the surgery formula in Figure \ref{surgery}.\\[10pt]
\begin{figure}
\centerline{\raisebox{5mm}{\LARGE $V_\A(\ $}\includegraphics[height=12mm]{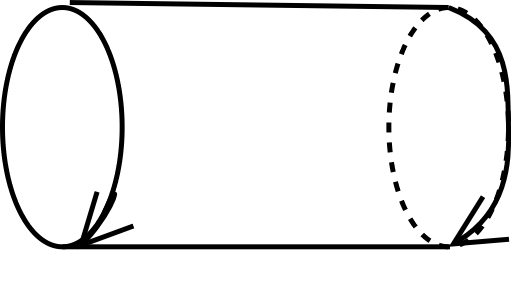}\raisebox{5mm}{\LARGE $\ )$}
\raisebox{5mm}{\LARGE $\ =\ $}\raisebox{5mm}{\LARGE $V_\A(\ $}\includegraphics[height=12mm]{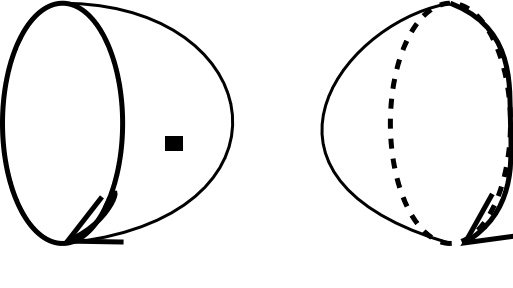}\raisebox{5mm}{\LARGE $\ )$}\raisebox{5mm}{\LARGE $\ +\ $}
\raisebox{5mm}{\LARGE $V_\A(\ $}\includegraphics[height=12mm]{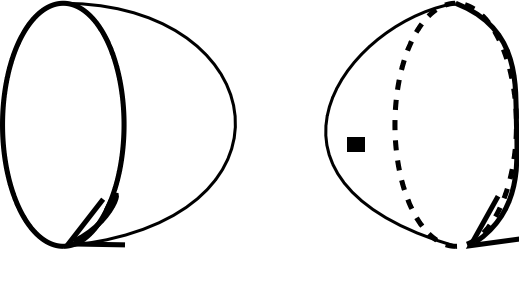}\raisebox{5mm}{\LARGE $\ )$}
}
\caption{\label{surgery} Surgery formula}
\end{figure}
Here cobordisms are depicted from left to right. The graphical identity can be  written
$$\mathrm{Id}_{\mathbf{A}}=\epsilon(X\times\cdot)\mathbf{1}+\epsilon(\cdot)X$$
\subsection{Graded TQFT}
We define a grading on $\mathbf{A}=\Z[X]/X^2$, by $\deg(\mathbf{1})=1$, 
$\deg({X})=-1$.
 The $q$-dimension (or Poincar\'e polynomial) of the TQFT module associated with
a $k$ components curve is $(q+q^{-1})^k$.
 The TQFT functor is graded. For a cobordism  $\Sigma$ between $\gamma$ and
$\gamma'$, 
the linear map $$V_\A(\Sigma):V_\A(\gamma)\rightarrow V_\A(\gamma') $$ has degree $\chi(\Sigma) -2\sharp\mathrm{pts}$. Here $\chi(\Sigma)$ is the Euler characteristic, and $\sharp\mathrm{pts}$ is the number of points.

\subsection{Trivalent category}
We will extend the TQFT over the cobordism category whose objects are trivalent graphs and whose morphisms are trivalent surfaces.
 Here a 
 {\em trivalent graph} is an oriented graph with edges labelled with $1$ or $2$,
and $3$-valent vertices where the flow condition is respected. For each trivalent vertex
an order on the $2$ (germs of) edges  labelled with $1$ is fixed. For a planar graph we use plane orientation and fix the order according to the rule depicted in figure \ref{trivalent}; the labels of the edges are obviously encoded in the arrows.
\begin{figure}
\centerline{\includegraphics[height=25mm]{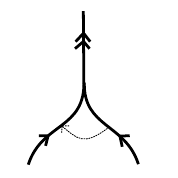}\hspace{1cm}
\includegraphics[height=25mm]{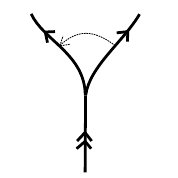}
}\caption{\label{trivalent}Trivalent vertices}
\end{figure}

A closed trivalent surface is a $2$-complex whose
 regular faces are oriented and labelled with $1$ or $2$.
 The singular locus is a curve called the binding; each component of the binding has a neighborhood which is a trivalent vertex times $S^1$, i.e. there are two  $1$-labelled pages inducing the same orientation and one   $2$-labelled page inducing the opposite orientation.
 For each  component of the binding,
an order on the two $1$-labelled   pages is fixed. A $1$-labelled face may have points on it.

Cobordisms are obtained by cutting in a generic way. They are considered up to the usual equivalence
of oriented homeomorphism rel. boundary.

\subsection{TQFT on the trivalent category}
The following general procedure constructs a functor (hopefully a TQFT functor) on the trivalent cobordism category. We first define an invariant of closed trivalent surfaces,
and extend it into a functor on the trivalent category via the universal construction
introduced in \cite{BHMV} and sketched above (\ref{universal}) for surfaces.

Suppose that we are given Frobenius algebras $A$, $B$ and $C$ over a ring $\mathbf{k}$, with corresponding TQFT functors denoted by $V_A$, $V_B$ and $V_C$.
 Let $\Sigma$ be a closed trivalent surface, and $\dot \Sigma=\Sigma_1\amalg \Sigma_2$ be the surface cut along the binding, decomposed according to the label of the faces.
 Let $m$ be the number of components of the binding of $\Sigma$. The boundary of $\Sigma_1$ has $2m$ oriented components $C_i^-$ and $C_i^+$, $1\leq i\leq m$, and the boundary of $\Sigma_2$ has $m$ components $C_i^2$.
 Here the $\mp$ is fixed with respect to the ordering of the $1$-labelled pages.
The TQFT functors $V_A$ and $V_B$ associate to $\Sigma_1$ and $\Sigma_2$  vectors
$$V_A(\Sigma_1)\in \bigotimes_{i=1}^m \left(V_A(C_i^-)\otimes V_A(C_i^+) \right)\ \cong \left({A}\otimes {A}\right)^{\otimes m}\ ,$$
$$V_B(\Sigma_2)\in \bigotimes_{i=1}^m V_B(C_i^2)\ \cong B^{\otimes m}\ .$$
 Now suppose that we are given maps $f=A\otimes A\rightarrow C$,
$g: B\rightarrow C$, then we
define the invariant $\V(\Sigma)$ by the formula
$$\mathbf{V}(\Sigma)= (\epsilon_C)^{\otimes m}\left(f^{\otimes m}(V_A(\Sigma_1))\times g^{\otimes m}(V_B(\Sigma_2)\right)\in \mathbf{k}^{\otimes m}=\mathbf{k}\ .$$
Here $\epsilon_C: C\rightarrow \mathbf{k}$ is the trace on the Frobenius algebra $C$; the product is computed in $C^{\otimes m}$.

From now on, we  use the Frobenius algebras over $\Z$: $\A=\Z[X]/X^2\approx H^*(\C P^1)$, $C=\A$, and $B=\Z$ with non standard trace $\epsilon_C(n)=-n$. The structural map $f$ is defined by $f(x\otimes y)= x\overline y$, where $\overline{a+bX}=a-bX$ ($a,b\in\Z$),
and $g : C=\Z\rightarrow \A=B$ is the unit map.  
\begin{exa}
 Let us consider the trivalent surface which is a sphere together with a $2$-labelled meridional disk, and whose $1$-labelled half-spheres are ordered north-south. The associated value
\begin{verse}
 is $0$ if there is no point,\\
is $1$ if there is one point which is on north half-sphere,\\
is $-1$ if there is one point which is on south half-sphere,\\
is $0$ if there is more than one point.
\end{verse}
\end{exa}
The universal construction extends the invariant $\V$ to a functor on the trivalent cobordism category.
 The following proposition shows that the functor $\V$ is an extension of the TQFT functor $V_A$.

\begin{pro}
a) We have a natural transformation from $V_A$ to
$\V$, i.e. for an oriented curve $\gamma$, we have a TQFT module $V_\A(\gamma)$, a module $\V(\gamma)$, and a natural map
$$i_\gamma : V_\A(\gamma)\rightarrow \V(\gamma)\ .$$
{\rm Here we label all the components of $\gamma$ with $1$, and consider $\gamma$ as an object in the trivalent category.}\\
b) For any curve $\gamma$, the natural map $i_\gamma$ is an isomorphism. 
\end{pro}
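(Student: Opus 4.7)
The plan is to realize $i_\gamma$ as inclusion of generators, and to use that $B=\Z$ is of rank one to rewrite every trivalent class of $\V(\gamma)$ as an explicit $\Z$-combination of pure $V_\A$-classes; surjectivity, well-definedness, and injectivity then follow at one stroke.

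\emph{Definition of $i_\gamma$ and injectivity.} A pure $1$-labelled surface with points bounding $\gamma$ is also a trivalent surface (with empty binding), so $\Sigma\mapsto[\Sigma]\in\V(\gamma)$ extends $\Z$-linearly to a map on the free module of $V_\A$-generators. Naturality under cobordism is immediate, as both $V_\A$ and $\V$ act on classes by gluing. The elementary observation is that since the binding count $m$ is zero for a pure closed surface, the defining formula for $\V$ collapses to $\V(\Sigma)=V_\A(\Sigma)$; so $\V$- and $V_\A$-pairings agree on pure surfaces. This forces injectivity: if $i_\gamma(x)=0$, then $x$ pairs to zero under $\V$ against every trivalent surface, in particular every pure one, hence lies in the kernel defining $V_\A(\gamma)$.

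\emph{Surjectivity.} Let $\Sigma$ be trivalent with $\partial\Sigma=\gamma$ and binding $c_1,\ldots,c_m$, decomposed $\dot\Sigma=\Sigma_1\sqcup\Sigma_2$. For any trivalent completion $\Sigma'$ with binding count $m'$, using $g(\mathbf 1)=\mathbf 1$ and the rank-one of $B$, the defining formula simplifies to
\[
\V(\Sigma\cup\Sigma')=V_B(\Sigma_2)\,V_B(\Sigma'_2)\prod_{k=1}^{m+m'}\epsilon(x_k\bar y_k),
\]
where $(x_k,y_k)\in\A\otimes\A$ records the matched pair at the $k$-th binding circle of $\Sigma_1\cup\Sigma'_1$. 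From $\bar a=a-2\epsilon(a)X$ in $\A$ one obtains $\epsilon(x\bar y)=\epsilon(xy)-2\epsilon(xX)\epsilon(y)$. Expanding this identity at each of the $m$ binding circles of $\Sigma$ yields
\[
\V(\Sigma\cup\Sigma')=V_B(\Sigma_2)\sum_{S\subseteq\{1,\ldots,m\}}(-2)^{|S|}\,\V(\tilde\Sigma^S\cup\Sigma'),
\]
where $\tilde\Sigma^S$ is the pure $1$-labelled surface bounding $\gamma$ obtained from $\Sigma_1$ by annular identification of $C_k^-$ with $C_k^+$ for each $k\notin S$, and by capping $C_k^-$ with a disk-with-point and $C_k^+$ with a plain disk for each $k\in S$. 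Since this holds for every $\Sigma'$, one reads off in $\V(\gamma)$ the identity $[\Sigma]=V_B(\Sigma_2)\sum_S(-2)^{|S|}[\tilde\Sigma^S]$, a $\Z$-combination of pure classes. Well-definedness of $i_\gamma$ is then automatic: any relation $\sum_ic_i[\Sigma_i]=0$ in $V_\A(\gamma)$ pairs to zero against every trivalent $\Sigma'$ under $\V$, by first rewriting $[\Sigma']$ in pure form via the above formula and then using $\V=V_\A$ on pure pairings.

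\emph{Main obstacle.} The hard part is verifying the cobordism interpretation of the expansion terms: confirming that, in the universal construction, $\epsilon(x_k y_k)$ really is the $V_\A$-contraction performed by an annular gluing of $C_k^-$ to $C_k^+$, and that $\epsilon(x_k X)\epsilon(y_k)$ is the contraction performed by the indicated disk-with-point and plain disk cappings. This reduces to matching the elementary Frobenius maps $\eta,\mu,\Delta,\epsilon$ on $\A$ with their standard cobordism representatives, so that the surfaces $\tilde\Sigma^S$ are unambiguously pure $V_\A$-generators bounding $\gamma$.
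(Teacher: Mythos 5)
Your argument is correct, and its logical skeleton is the same as the paper's: define $i_\gamma$ on pure $1$-labelled generators, note that the $\V$- and $V_\A$-pairings coincide on pure surfaces (which gives injectivity), and reduce an arbitrary trivalent generator to a $\Z$-combination of pure ones (which gives surjectivity and, via the pairing, well-definedness). Where you genuinely differ is in the reduction mechanism. The paper applies the surgery (neck-cutting) formula $\mathrm{Id}_\A=\epsilon(X\cdot)\mathbf 1+\epsilon(\cdot)X$ of Figure \ref{surgery} inside a $1$-labelled collar of each component of $\gamma$; this severs the boundary from the rest of the surface and expresses every class as a combination of disks-with-points weighted by closed evaluations. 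You instead expand at each binding circle, using the identity $\epsilon(x\bar y)=\epsilon(xy)-2\epsilon(xX)\epsilon(y)$ (which checks out: both sides equal $bc-ad$ for $x=a+bX$, $y=c+dX$) to trade the twisted contraction $f(x\otimes y)=x\bar y$ for an untwisted annular gluing plus disk cappings. Your route stays at the singular locus and yields pure surfaces of arbitrary topology, which suffices here; the paper's route has the extra payoff of exhibiting explicit disk generators, which is what drives the later rank computations. Two small caveats, neither fatal: your formula $\V(\Sigma\cup\Sigma')=V_B(\Sigma_2)V_B(\Sigma_2')\prod_k\epsilon(x_k\bar y_k)$ suppresses the non-standard trace $\epsilon_C$ (the paper's own sign conventions are internally inconsistent at this point, and the discrepancy is harmlessly absorbed into the integer coefficients); and the ``main obstacle'' you flag --- that self-gluing realizes $x\otimes y\mapsto\epsilon(xy)$ while the plain and pointed disk cappings realize $\epsilon(\cdot)$ and $\epsilon(\cdot X)$ --- is exactly the standard Frobenius--cobordism dictionary and closes without difficulty.
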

\begin{proof}
The surgery formula in Figure \ref{surgery}
holds for surgery on a $1$-labelled face of a trivalent surface. Using this formula along each component of the curve $\gamma$, we see that any trivalent surface with boundary $\gamma$ representing a generator of $\V(\gamma)$ can be written as a linear combination
of disks (may be with points), and that any linear combination representing a relation in $V_A(\gamma)$
also represents a relation in $\V(\gamma)$. This proves existence and surjectivity of $i_\gamma$. Injectivity of $i_\gamma$ and naturality follow from the definitions in the universal construction.
\end{proof}
The extended  functor $\V$ is still graded. The formula for a cobordism  $\Sigma$ is
$$\deg(\Sigma)=\chi_1(\Sigma) -2\sharp\mathrm{pts}\ .$$
Here $\chi_1(\Sigma)$ is the Euler characteristic of the $1$-labelled subsurface, e.g. the saddle with $2$-labelled membrane in figure \ref{saddlem} has grading $-1$. Here $1$-labelled faces are depicted in light grey and the $2$-labelled
half-disc is black.
\begin{figure}
\centerline{\includegraphics[height=40mm]{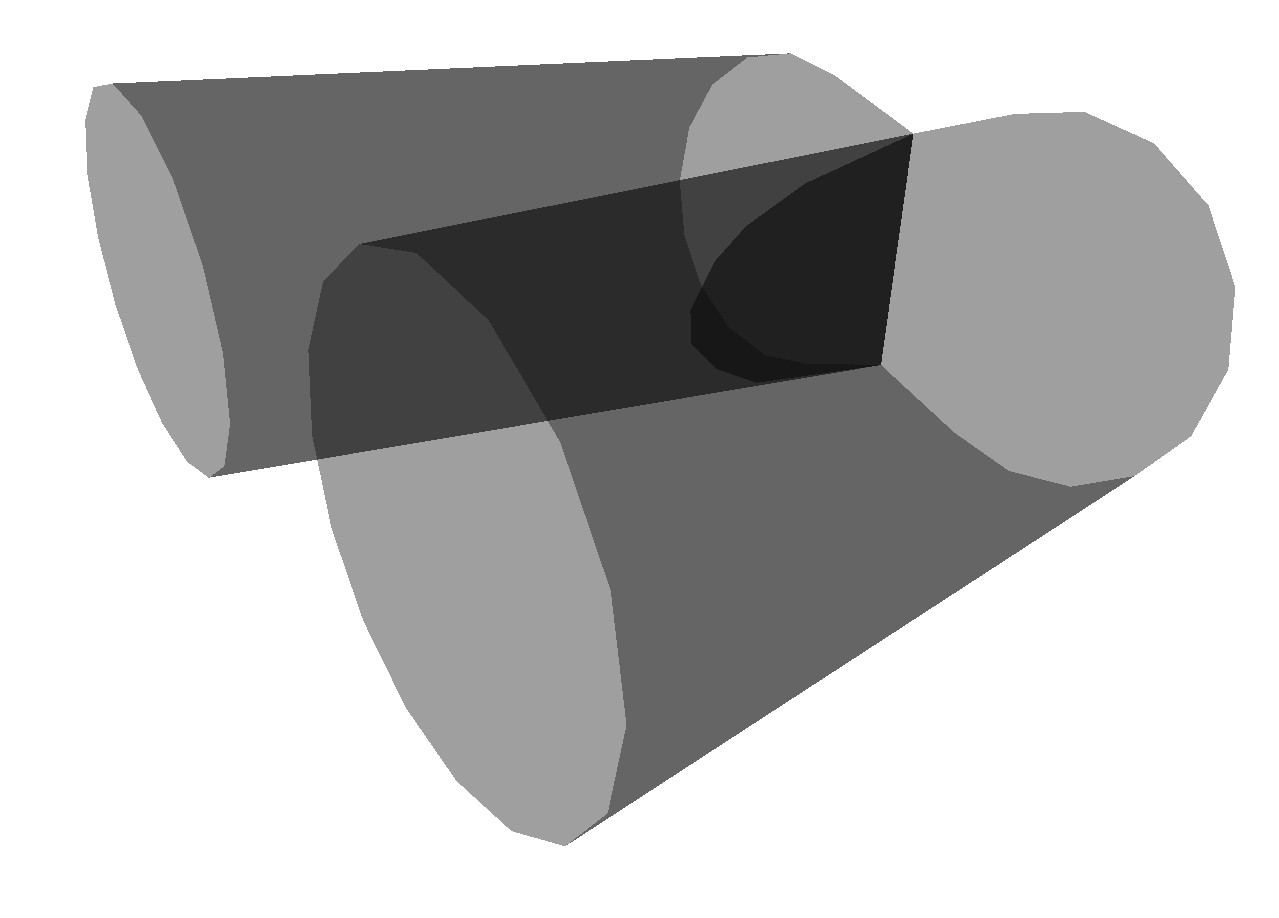}}
 \caption{\label{saddlem} Trivalent surface with grading $-1$}
\end{figure}

The lemma below gives some examples of computation with the extended functor $\mathbf{V}$.
These results will be useful in the subsequent categorification procedure. In the pictures, the order on the germs of $1$-labelled edges is fixed by the following  plane convention (Figure \ref{trivalent}): the first $1$-labelled edge is on the right
of the oriented $2$-labelled adjacent edge. Proofs are left as exercise.
\begin{lem} \label{lemabcd}
a) If $\Sigma'$ is obtained from $\Sigma$ by moving a point across a component of the  binding then
$\V(\Sigma')=-\V(\Sigma)$.\\
b) The bubble relations in Figure \ref{bubbles} hold.\\
c) The band moves relations in Figure \ref{band} hold.\\
d) The tube relations in Figure \ref{tube} hold (the sign depends on the order of the $1$-labelled pages at each binding).
\end{lem}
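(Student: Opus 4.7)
The plan is to reduce each of the four assertions to a closed-surface computation via the universal construction: two trivalent cobordisms $T_1, T_2$ with common boundary induce equal morphisms under $\V$ iff $\V(T_1 \cup T') = \V(T_2 \cup T')$ for every capping cobordism $T'$, and the latter is evaluated by
$$\V(\Sigma) = \epsilon_C^{\otimes m}\bigl(f^{\otimes m}(V_A(\Sigma_1)) \cdot g^{\otimes m}(V_B(\Sigma_2))\bigr).$$
Each of the four moves is local, so only the local tensor factor(s) at the affected binding(s) need be compared.

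For (a), moving a point from the first to the second $1$-labelled page at a binding $i$ modifies $V_A(\Sigma_1)$ only in its $i$-th tensor slot: a local factor $Xa \otimes b$ is replaced by $a \otimes Xb$, where $a, b \in \A$ encode the rest of the surrounding components. Since $\overline{X} = -X$, the structural map $f(x \otimes y) = x \bar y$ satisfies $f(Xa \otimes b) = X\, f(a \otimes b)$ and $f(a \otimes Xb) = -X\, f(a \otimes b)$, so the $i$-th factor changes sign while all other factors and $V_B(\Sigma_2)$ are unchanged, yielding $\V(\Sigma') = -\V(\Sigma)$. If the two pages happen to lie in the same connected component of $\Sigma_1$, a direct check shows that $f$ applied to the relevant slot vanishes for every endomorphism of $\A$ in the image of the $2$-dimensional TQFT $V_\A$, so both invariants are zero and the identity holds trivially.

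For (b), (c), (d), the same principle applies, combined with the surgery formula of Figure \ref{surgery} on $1$-labelled faces. The bubble relations (b) reduce to evaluations of closed trivalent spheres of the type computed in the Example, where the formula for $\V$ and the traces $\epsilon_C, \epsilon$ are explicit. The band moves (c) exploit that $B = \Z$ is $1$-dimensional, so $V_B(\Sigma_2)$ depends only on the signed Euler-characteristic data of the $2$-labelled subsurface; any rearrangement preserving this data leaves $\V$ unchanged. The tube relations (d) are obtained by surgery on a $1$-labelled tube meeting the binding, with signs dictated by (a): each reversal of the ordering of the $1$-labelled pages at a binding contributes a factor $-1$ via the $f$-asymmetry. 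The main obstacle I anticipate is the sign bookkeeping in (d), where multiple bindings may interact and one must verify the signs assemble coherently; part (a) supplies the universal mechanism, but the explicit tracking is the routine but delicate calculation that the author designates as an exercise.
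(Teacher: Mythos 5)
The paper offers no proof of this lemma --- it is explicitly ``left as exercise'' --- so there is no official argument to diverge from; your reduction to closed-surface evaluations via the universal construction and the explicit formula for $\V$ is exactly the intended mechanism, and your treatment of (a) is essentially complete: the identity $f\circ(X\otimes\mathrm{id})=-f\circ(\mathrm{id}\otimes X)$ on $\A\otimes\A$, coming from $\overline{X}=-X$, gives the sign flip in the $i$-th tensor slot. Two remarks. First, your case distinction in (a) is unnecessary and its resolution is dubious: the sign argument is linear in the $i$-th slot of $(\A\otimes\A)^{\otimes m}$, so it applies verbatim whether or not $V_A(\Sigma_1)$ is a pure tensor or the two pages lie in the same component of $\Sigma_1$; the claimed vanishing in that case is both unsubstantiated and not needed. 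Second, and more seriously, your justification of (c) --- that since $B=\Z$ is rank one, ``any rearrangement preserving this data leaves $\V$ unchanged'' --- cannot be right as stated, because the first band relation in Figure \ref{band} carries an explicit minus sign; a band move on the $2$-labelled membrane also re-routes which $1$-labelled pages are first and second at the affected bindings, and the sign must be extracted from the same $f$-asymmetry (or from the nonstandard trace $\epsilon_C(n)=-n$) rather than dismissed. Parts (b) and (d) are plans rather than proofs, but they are the right plans; to close them you would cap off each local picture with an arbitrary trivalent cobordism and compare the two closed evaluations, using the surgery formula of Figure \ref{surgery} on $1$-labelled faces together with the sphere computations of the Example.
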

In the pictures the $2$-labelled faces are depicted in black, the $1$-labelled faces are depicted in grey.
The small arc indicates the order around the binding.
\begin{figure}
\centerline{\includegraphics[height=30mm]{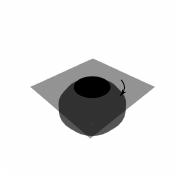}
\raisebox{12mm}{$\displaystyle=$}
\includegraphics[height=30mm]{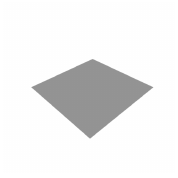}
\raisebox{12mm}{$\displaystyle=\ -$}
\includegraphics[height=30mm]{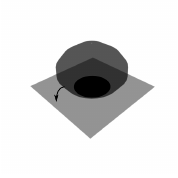}
}
\centerline{\includegraphics[height=30mm]{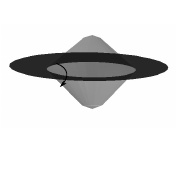}
\raisebox{17mm}{$\displaystyle=\ 0$}}
\centerline{\includegraphics[height=30mm]{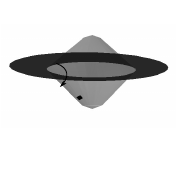}
\raisebox{17mm}{$\displaystyle=$}
\includegraphics[height=30mm]{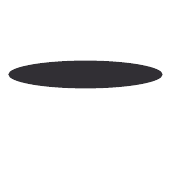}
\raisebox{17mm}{$\displaystyle=\ -$}
\includegraphics[height=30mm]{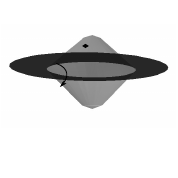}
}
\caption{\label{bubbles} Bubble relations}
\end{figure}
\begin{figure}
\centerline{\includegraphics[height=40mm]{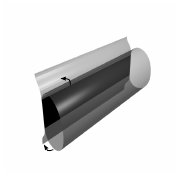}
 \raisebox{16mm}{$\displaystyle=\ -$}
 \includegraphics[height=40mm]{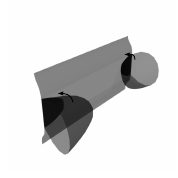}
}
\centerline{\includegraphics[height=40mm]{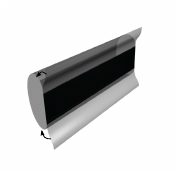}
\raisebox{16mm}{$\displaystyle=$}
\includegraphics[height=40mm]{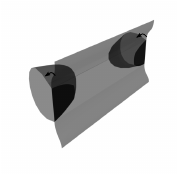}
 }
\caption{\label{band} Band relations}
\end{figure}

\begin{figure}
\centerline{\includegraphics[height=40mm]{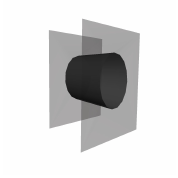}
 \raisebox{16mm}{$\displaystyle=\ \pm$}
 \includegraphics[height=40mm]{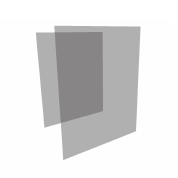}
}
\centerline{\includegraphics[height=40mm]{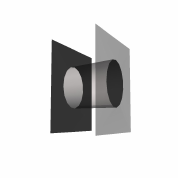}
\raisebox{16mm}{$\displaystyle=\ \pm$}
\includegraphics[height=40mm]{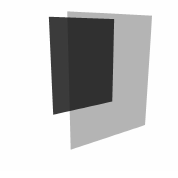}
 }
\caption{\label{tube} Tube relations}
\end{figure}

\section{Categorification of the $sl(2)$ invariant of planar graphs}
We consider here trivalent planar graphs whose edges are smooth; each edge has a label equal to $1$ or $2$.
 In each trivalent vertex, the flow is conserved, and the   tangent vectors are coherent.
  Loops with label $1$  are accepted. The admissible vertices are depicted in Figure
\ref{trivalent}. 
  In the representation theoretic setting, $1$-labelled edges correspond to the standard representation of $sl(2)$, and $2$-labelled edges correspond to its determinant (isomorphic to the trivial representation).

An enhancement $\epsilon$ for such a graph is a map from the set of $1$-labelled edges to $\{-1,1\}$ required to have distinct values for the two edges adjacent to a trivalent vertex.
 To each trivalent vertex $v$ we associate a weight 
$ \mathcal{W}(v) = q^{\pm\frac{1}{2}}$. Here $q$ is an indeterminate, and the sign is given
by the state of the right handed edge.
The $sl(2)$ invariant of such a graph $G$ is given by
$$\langle G\rangle=\prod_{\mathrm{vertices\ } v}\mathcal{W}(v) \ q^{\sum_{a}\epsilon(a)\mathrm{rot}(a)}\ .$$
The sum is over all $1$-labelled edges $a$, and $\mathrm{rot}(a)$ is the variation of the tangent vector along the edge, normalized so that it gives the Whitney degree (signed number of rotation) for a closed curve.

 The invariant $\langle G\rangle$ is easily seen to be equal to $(q+q^{-1})^{\sharp G_1}$ where $\sharp G_1$ is the number of components of the curve composed with the $1$-labelled edges. Its interest is that it allows to give a state model for the Jones polynomial
similar to the Kauffman bracket state model, but taking into account the orientation.

 We associate to such a graph the graded module
$\V(G)=\oplus_k \V_k(G)$. For any graph $G$, the module $\V(G)$ admits a finite set of generators which can be obtained
by first pairing the trivalent vertices with singular arcs and then gluing discs, may be with points on the $1$-labelled ones.
 The module itself can then be computed using the pairing.
\begin{exo}
Let $G_1$, $G_2$, $G3$, $G_4$ be the graphs depicted in Figure \ref{graphs}. Show that
$$\V(G_1)\approx \A\ , \V(G_2)\approx \A^{\otimes 2}\ ,\ \V(G_3)\approx \A^{\otimes 2}\ ,\ \V(G_4)\approx \A\ .$$
\end{exo}
\begin{figure}
\centerline{\includegraphics[height=40mm]{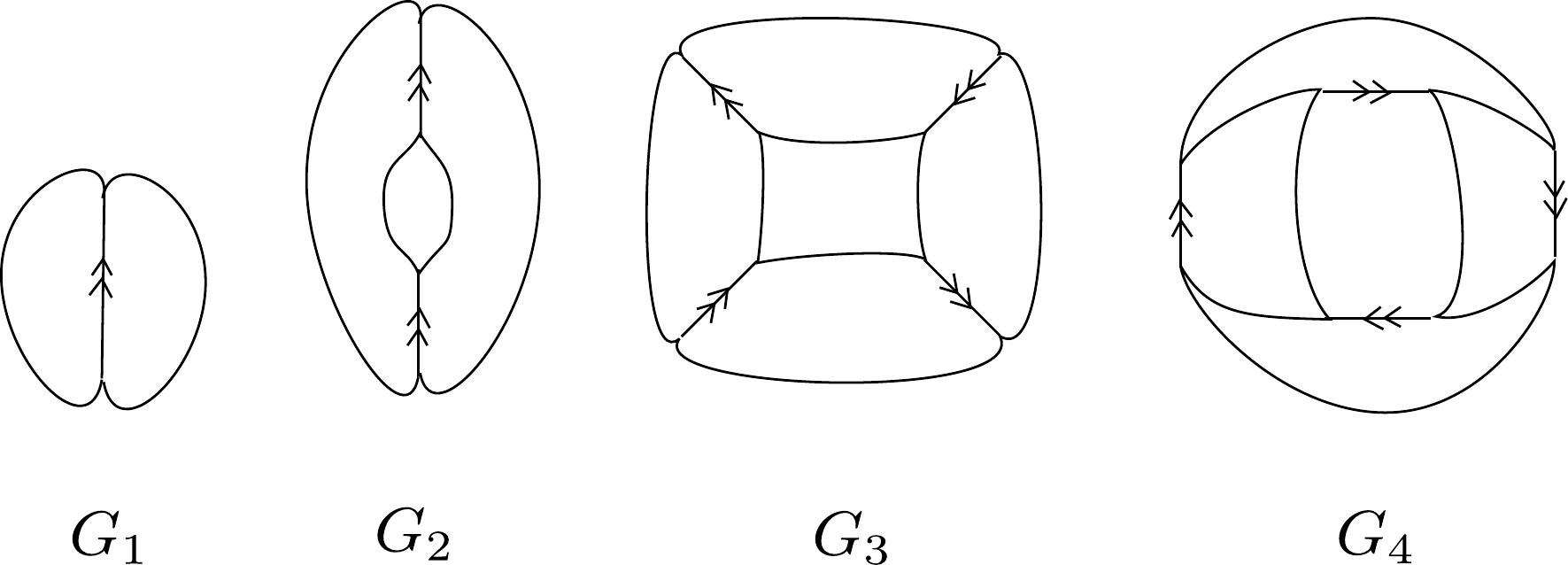}
 }
\caption{\label{graphs}}
\end{figure}

\begin{pro}
$\V(G)$ is a free abelian group whose $q$-dimension is equal to the invariant
$\langle G\rangle$:
$$\langle G\rangle=\sum_k \ q^k\ \mathrm{rank}( \V_k(G))\ .$$
\end{pro}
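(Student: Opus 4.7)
The plan is to exhibit $2^{\sharp G_1}$ explicit generators $\{v_S\}$ of $\V(G)$ with the correct degrees, and then to show they form a $\Z$-basis via a non-degenerate gluing pairing.

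Following the description preceding the statement, I choose for each $2$-labelled edge of $G$ a singular arc joining its endpoints, and attach a $2$-labelled half-disc along that edge and arc. The remaining boundary is the smoothed $1$-labelled curve $\gamma$ with $\sharp G_1$ components. For each subset $S\subseteq \pi_0(\gamma)$, I cap every component of $\gamma$ by a $1$-labelled disc, placing a point on the component $c$ iff $c\in S$; call the resulting trivalent surface $v_S$. Its grading is $\sharp G_1 - 2|S|$, so $\sum_S q^{\deg v_S}=(q+q^{-1})^{\sharp G_1}=\langle G\rangle$. To see the $v_S$ generate $\V(G)$, I start with an arbitrary trivalent surface with boundary $G$ and apply the surgery formula (Figure \ref{surgery}) along $1$-labelled faces together with the bubble, band, and tube relations of Lemma \ref{lemabcd}; these reduce the surface, modulo the relations, to a $\Z$-linear combination of the $v_S$.

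For linear independence, I pair $\V(G)$ with $\V(\overline G)$ via the gluing map to $\V(\emptyset)=\Z$, and consider the candidate duals $v_T^\vee:=v_{T^c}\in \V(\overline G)$, where $T^c$ is the complement in $\pi_0(\gamma)$. The closed trivalent surface $v_S\cup v_T^\vee$ decomposes, after applying the tube relations of Lemma \ref{lemabcd}(d), into $\sharp G_1$ closed $1$-labelled spheres, one per component $c$ of $\gamma$, together with auxiliary $2$-labelled pieces whose contribution is invertible. The sphere for component $c$ carries $[c\in S]+[c\notin T]$ points; by the example after the universal construction, its value is nonzero (in fact a unit) precisely when this count equals $1$, i.e.\ iff $c\in S\Leftrightarrow c\in T$. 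Hence the Gram matrix $(\langle v_S, v_T^\vee\rangle)_{S,T}$ is diagonal with entries $\pm 1$, which simultaneously proves linear independence and that the rank is exactly $2^{\sharp G_1}$.

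The principal obstacle is tracking the signs introduced by the cyclic orderings at the bindings (Lemma \ref{lemabcd}(a,d)) during both the reduction to standard form and the Gram-matrix computation. Because only diagonal entries survive and each arises from a fixed combinatorial template, the worst possible outcome is a global sign on each basis vector, which preserves freeness and the total rank.
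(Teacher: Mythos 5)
Your route is genuinely different from the paper's: the paper proves the proposition by induction on the number of $2$-labelled edges, using the bigon and square reductions (Lemmas \ref{bigons} and \ref{squares}) to pass to graphs with fewer $2$-labelled edges, the base case being a single $1$-labelled loop. A direct generators-plus-pairing argument is a reasonable alternative in principle, but as written it contains a genuine error in the combinatorics of the generating set and, consequently, in the Gram matrix computation.

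The curve left over after attaching the $2$-labelled half-discs is the oriented resolution of $G$, in which each $2$-labelled edge is replaced by the two pages of its binding arc; its number of components $k$ is in general strictly larger than $\sharp G_1$. Take the theta graph (two vertices joined by two $1$-labelled edges and one $2$-labelled edge): there $k=2$ while $\sharp G_1=1$, so your construction produces $2^k=4$ surfaces $v_S$, which cannot be linearly independent in a group of rank $2$. Correspondingly the Gram matrix cannot be diagonal with unit entries, and the computation of $\langle v_S, v_T^\vee\rangle$ breaks down at exactly this point: the closed surface does not evaluate as a product over $1$-labelled spheres, because two $1$-labelled faces sharing a binding component are coupled through the structural map $f(x\otimes y)=x\overline{y}$. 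This is precisely the content of the Example in Section \ref{s1}: a sphere with a $2$-labelled meridional disc evaluates to $0$ as soon as it carries two points, even when the points lie on different hemispheres. Concretely, for the theta graph the surface with one point on each of the two $1$-labelled discs is zero in $\V(G)$ (move one point across the binding using Lemma \ref{lemabcd}.a, then use $X^2=0$), yet your scheme assigns it a diagonal entry $\pm1$. The degree claim $\deg v_S=\sharp G_1-2|S|$ fails for the same reason, since $\chi_1$ of the standard surface is the number of capping discs, namely $k$, not $\sharp G_1$. To repair the argument you would need to index independent generators by point-assignments to the components of the $1$-labelled subgraph (equivalently, to the components of the $1$-labelled subsurface together with its binding) and evaluate the pairing with the defining formula $\epsilon_C^{\otimes m}\bigl(f^{\otimes m}(V_\A(\Sigma_1))\, g^{\otimes m}(V_B(\Sigma_2))\bigr)$ rather than sphere by sphere; keeping track of how the bindings tie the resolution components together is essentially what the paper's bigon and square induction accomplishes. (Separately, the generation step, which both you and the paper assert, still requires the $4$-term and square relations to handle surfaces whose binding pairs the trivalent vertices differently from the standard pattern.)
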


\begin{rem} 
We understand this theorem as a categorification of the invariant $\langle G\rangle$.
 Indeed, the functor $\V$ associates to a graph $G$ a graded abelian group which can be interpreted as (co)homology concentrated in (co)homological degree zero. The purpose of the next section is to extend this categorification to link diagrams.
\end{rem} 

\begin{proof}
 It is enough to prove the formula for a connected graph.
We proceed by induction on the number of $2$-labelled edges.
 If this number is $0$, we get a loop whose value is $q+q^{-1}$.
 By an Euler characteristic argument a trivalent graph will have at least one face which is either a bigon or a square. Lemmas \ref{bigons} and \ref{squares} below shows that the computation reduces to graphs with less $2$-labelled edges.
\end{proof}
\begin{lem}[Bigons]\label{bigons}
a) $\gm \V(\raisebox{-4mm}{\includegraphics[height=12mm]{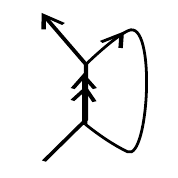}}\ )\simeq \V(\raisebox{-4mm}{\includegraphics[height=12mm]{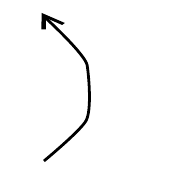}})$.\\
b) $\gm \V(\raisebox{-4mm}{\includegraphics[height=12mm]{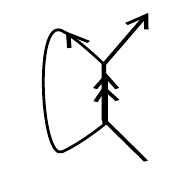}}\ )\simeq \V(\raisebox{-4mm}{\includegraphics[height=12mm]{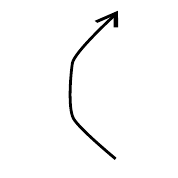}})$.\\
c) $\gm \V(\raisebox{-7mm}{\includegraphics[height=16mm]{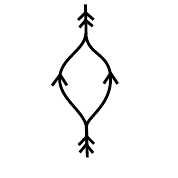}} )\simeq\V(\raisebox{-7mm}{\hspace{-10pt}\includegraphics[height=16mm]{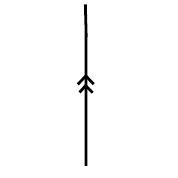}}\hspace{-10pt})\{-1\}\oplus 
\V(\raisebox{-7mm}{\hspace{-10pt}\includegraphics[height=16mm]{G/double.pdf}}\hspace{-10pt})\{1\} $.
\end{lem}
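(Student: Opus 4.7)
The plan is to apply the universal construction. Elements of each $\V(\cdot)$ come from trivalent surfaces bounding the graph, modulo the kernel of the gluing pairing, so each stated isomorphism should be realized by an explicit cobordism whose inverse is constructed from a mirror cobordism; invertibility is then verified by evaluating closed trivalent surfaces via the bubble, band and tube relations of Lemma \ref{lemabcd}.

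For parts (a) and (b), the two graphs differ only inside a local disk where a $2$-labelled edge is pushed past a vertex or reoriented across a $1$-labelled arc. The natural cobordism realizing the move is a $2$-labelled strip swept across the adjacent $1$-labelled region. By the band relations of Lemma \ref{lemabcd}(c), this cobordism composed with its mirror is equivalent, up to sign, to the identity cobordism on either side, so the induced map on $\V$ is invertible. Since the local move does not alter $\chi_1$ and uses no points, the induced map is degree-preserving, giving the stated isomorphisms.

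For part (c), write $B$ for the bigon and $D$ for the double edge. I construct two cobordisms from $D$ to $B$ by attaching a $2$-labelled half-disc that pinches the double edge into a bigon, either bare or carrying one extra point on a designated $1$-labelled face; by the grading formula $\deg=\chi_1-2\sharp\mathrm{pts}$, they have degrees $-1$ and $+1$ respectively (compare Figure \ref{saddlem}). They thus yield degree-preserving maps
\[
\phi_-:\V(D)\{-1\}\longrightarrow\V(B),\qquad \phi_+:\V(D)\{+1\}\longrightarrow\V(B).
\]
Let $\psi_\pm$ be the mirror cobordisms in the opposite direction. Each composition $\psi_i\circ\phi_j$ is a closed trivalent surface near the bigon containing a configuration to which the bubble relations of Lemma \ref{lemabcd}(b) apply, combined with the point-crossing rule of Lemma \ref{lemabcd}(a); a direct computation gives $\psi_i\circ\phi_j=\delta_{ij}\,\mathrm{id}$. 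Surjectivity of $\phi_+\oplus\phi_-$ then follows by applying the surgery formula of Figure \ref{surgery} along the core $1$-labelled circle of the bigon's annular region: each generating surface with boundary $B$ splits into two summands, each factoring through a surface with boundary $D$ and producing an element in the appropriate shifted summand.

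The main obstacle is keeping the signs consistent. The minus sign from moving a point across the binding (Lemma \ref{lemabcd}(a)), the explicit minus signs in the bubble relations, and the $\pm$ ambiguity in the tube relations must conspire so that the diagonal compositions evaluate to $+\mathrm{id}$ while the off-diagonal ones vanish. The choice of which $1$-labelled face carries the extra point in $\phi_+$ versus $\phi_-$, together with the ordering convention of $1$-labelled pages at each binding fixed in Figure \ref{trivalent}, is dictated entirely by this sign bookkeeping.
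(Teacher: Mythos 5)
Your overall strategy is the same as the paper's: parts (a) and (b) come from the band relations of Lemma \ref{lemabcd}(c) by cutting the composite cobordism in the middle, and your part (c) --- two pinch cobordisms from the double edge to the bigon distinguished by a point, orthogonality checked with the bubble relations, completeness obtained by neck--cutting --- is exactly the content of the bigon relation of Figure \ref{fbigon} that the paper invokes via Lemma \ref{bigon} to split the identity of $\V(B)$ into two orthogonal idempotents factoring through $\V(D)$.

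Two pieces of bookkeeping in your part (c) are wrong as written, although both are repairable. First, the degrees are swapped: by $\deg=\chi_1-2\sharp\mathrm{pts}$, adding a point \emph{lowers} the degree by $2$, so the bare pinch (whose $1$-labelled part is a disc) is the higher-degree map and matches the summand $\V(D)\{1\}$, while the pointed pinch has degree two less and matches $\V(D)\{-1\}$; your assignment ``bare has degree $-1$, pointed has degree $+1$'' contradicts the grading formula you quote. Second, if $\psi_{\pm}$ is literally the mirror of $\phi_{\pm}$, then each composite $\psi_i\circ\phi_j$ closes up into the bubble of Figure \ref{bubbles} carrying the total number of points of the two pinches: zero points and two points both give $0$, and exactly one point gives $\pm\mathrm{id}$. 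Hence the pairing is \emph{anti}-diagonal, not $\delta_{ij}$ --- the left inverse of the pointed pinch is (up to sign) the mirror of the bare one, and vice versa. Once you relabel $\psi_{\pm}$ as the mirrors of $\phi_{\mp}$ with the appropriate signs, your argument goes through and recovers the paper's decomposition of $\mathrm{id}_{\V(B)}$ into the two orthogonal idempotents, with the shifts $\{\pm1\}$ read off from the degrees of the corrected projections.
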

Here the bracket in right hand side of c) indicates a shift in the grading.
\begin{proof}
 a) and b) are deduced from the band relations in Lemma \ref{lemabcd}.c.
Indeed the cobordisms in the right hand side of these relations can be decomposed by cutting in the middle. The induced TQFT maps give the needed isomorphisms.

Lemma \ref{bigon} below, whose proof is left to the reader, decomposes identity
of the module on the left hand side of c) into two orthogonal idempotents whence the direct sum decomposition. Note the shift given by the degree of the cobordisms inducing the projection on each summand.
 \end{proof}
\begin{lem}\label{bigon}
 The relations in figure \ref{fbigon} and \ref{4t} holds.
\end{lem}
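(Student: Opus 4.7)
The plan is to derive both relations directly from the universal construction, using as building blocks the surgery formula of Figure \ref{surgery} (which holds along any $1$-labelled face of a trivalent surface) together with the bubble, band, and tube relations of Lemma \ref{lemabcd}. Since $\V$ is defined as the quotient by the gluing form, it suffices in each case to check that, after pairing with an arbitrary test cobordism, both sides of the claimed identity evaluate to the same number on every closed trivalent surface obtained by closing up. By the universal construction, this reduces the problem to manipulating closed surfaces modulo the surgery and bubble relations, which is concrete.

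For the relation in Figure \ref{fbigon}: I would start with the identity cobordism of the bigon and apply the surgery formula of Figure \ref{surgery} to a $1$-labelled disk separating the two $1$-labelled strands running through the bigon. This yields two terms, each of which factors as a splitting cobordism followed by a merging cobordism through the doubled-edge graph, one carrying an extra point on the top half-bigon and one carrying it on the bottom. These are precisely the two composites displayed in Figure \ref{fbigon}, and recognizing them as the orthogonal idempotents is exactly what powers the decomposition in Lemma \ref{bigons}.c. The grading shifts $\{-1\}$ and $\{+1\}$ required there are read off from $\deg(\Sigma)=\chi_1(\Sigma)-2\sharp\mathrm{pts}$ applied to the two half-cobordisms: one carries a point (degree $-1$) and the other does not, but compensates via Euler characteristic.

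For the $4$T-type relation in Figure \ref{4t}: I would again apply the surgery formula on a suitably chosen $1$-labelled face, this time in a neighbourhood containing two bindings, and then collapse the intermediate closed bubbles via Lemma \ref{lemabcd}.b. The sign in the resulting identity is governed jointly by Lemma \ref{lemabcd}.a (a point crossing a binding changes sign) and by the signs in the tube relation Lemma \ref{lemabcd}.d, which in turn depend on the order of the $1$-labelled pages. Matching these signs term-by-term reproduces the four-term relation.

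The main obstacle is not conceptual but combinatorial: tracking the signs consistently through all substitutions, since each binding carries an ordering of its $1$-labelled pages and moving a point across a binding flips the sign. Once a fixed convention for page ordering and point position is chosen (the plane convention described in Lemma \ref{lemabcd}), the computation is a bookkeeping exercise in which every ambiguity is resolved by the explicit formulas for $f$, $g$, and the non-standard trace $\epsilon_C(n)=-n$ fixed just before the example.
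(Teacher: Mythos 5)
The paper itself gives no proof of this lemma (it is explicitly "left to the reader" in the proof of Lemma \ref{bigons}), so there is no argument of the author's to compare against; the question is only whether your plan would work. It would: the mechanism you identify is the intended one. The relation of Figure \ref{fbigon} is exactly the surgery formula of Figure \ref{surgery}, $\mathrm{Id}_{\mathbf{A}}=\epsilon(X\times\cdot)\mathbf{1}+\epsilon(\cdot)X$, applied to the $1$-labelled annulus inside the identity cobordism of the bigon; the two resulting terms are then identified with the composites through the doubled edge by the band relations, and the relative minus sign comes from Lemma \ref{lemabcd}.a/b (moving the point across the binding, or equivalently the signed bubble evaluations). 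The same neck-cutting on an appropriate $1$-labelled face, followed by the bubble and tube relations, handles Figure \ref{4t}. Alternatively, and perhaps more cleanly, one can verify both identities directly from the universal construction by pairing each side against an arbitrary closed-up test surface and evaluating with $\V$; you mention both routes, which is fine since the surgery formula is itself established by the second method.

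Two caveats. First, the proposal is a plan rather than a proof: neither computation is actually carried out, and for the $4$-term relation you do not specify which face is surgered nor check that the signs produced by the page orderings at the two bindings really match the right-hand side, which is the only delicate point in this theory. Second, your remark on degrees is off: both composites on the right-hand side of Figure \ref{fbigon} carry exactly one point each (one on each $1$-labelled face of the doubled-edge piece), and the shifts $\{\pm 1\}$ in Lemma \ref{bigons}.c arise from splitting each idempotent as a projection followed by an inclusion and computing $\chi_1-2\sharp\mathrm{pts}$ for each half separately, not from one composite having a point and the other not. This does not affect the validity of Lemma \ref{bigon} itself, but it should be corrected if the sketch is expanded into the decomposition argument.
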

\begin{figure}
\centerline{\includegraphics[height=30mm]{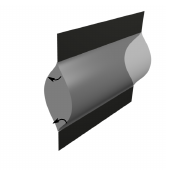}
\raisebox{12mm}{$\displaystyle=$}
\includegraphics[height=30mm]{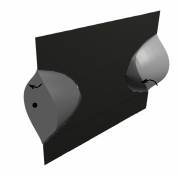}
\raisebox{12mm}{$\displaystyle\ -$}
\includegraphics[height=30mm]{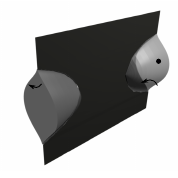}
}
\caption{\label{fbigon} Bigon relation}
\end{figure}
\begin{figure}
\centerline{\includegraphics[height=30mm]{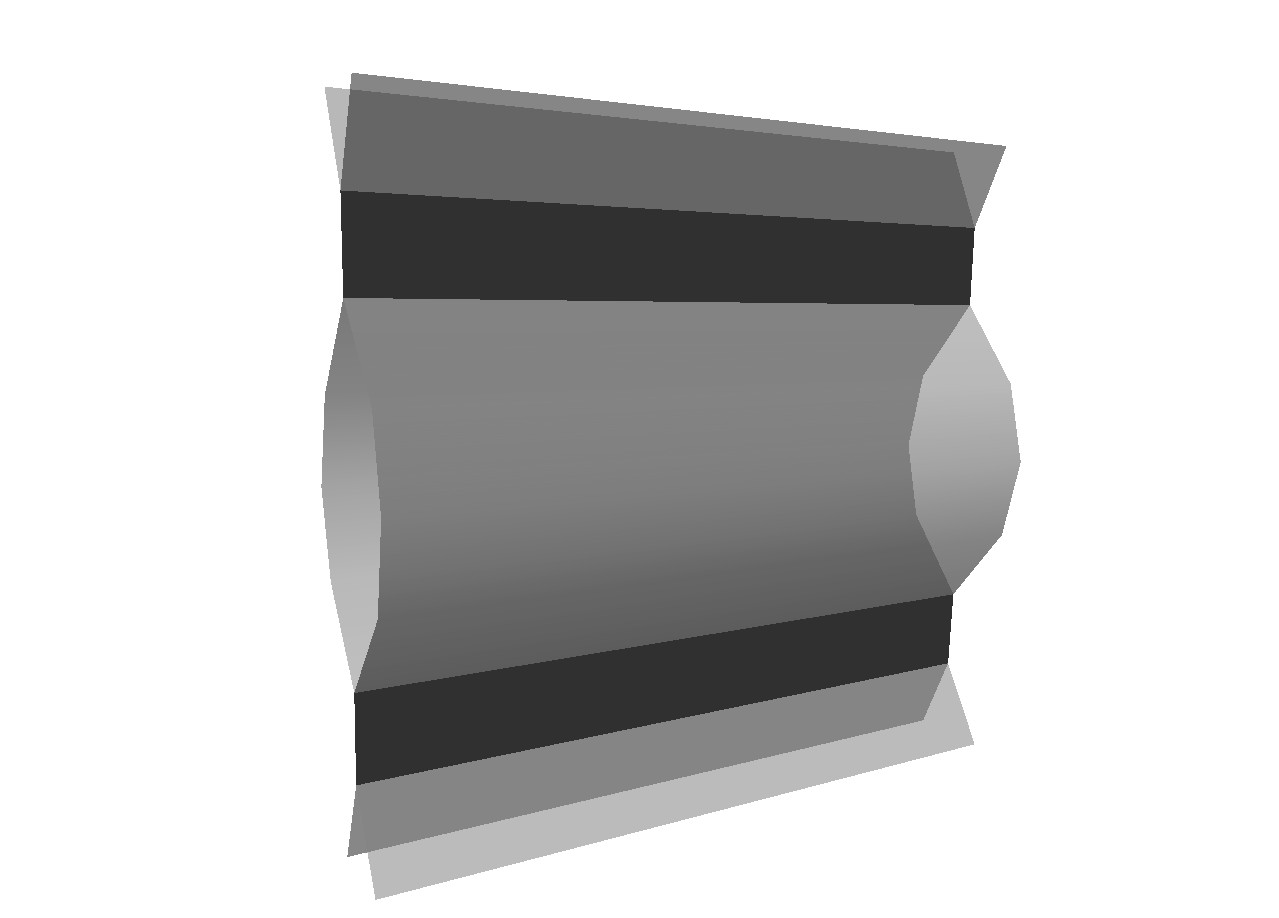}
\raisebox{12mm}{$\displaystyle=$}
\includegraphics[height=30mm]{G/bigon2.pdf}
\raisebox{12mm}{$\displaystyle\ -$}
\includegraphics[height=30mm]{G/bigon3.pdf}
}
\caption{\label{4t} $4$ terms relation}
\end{figure}
\begin{lem}[Squares]\label{squares}
a) $\gm \V(\raisebox{-5mm}{\includegraphics[height=12mm]{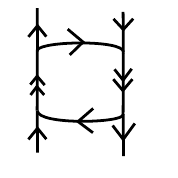}} )\simeq\V(\raisebox{-5mm}{\includegraphics[height=12mm]{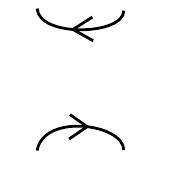}})
$.\\
b) $\gm \V(\raisebox{-5mm}{\includegraphics[height=12mm]{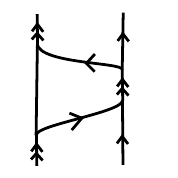}} )\simeq\V(\raisebox{-5mm}{\includegraphics[height=12mm]{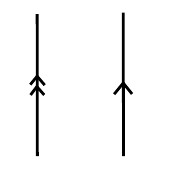}})$.
\end{lem}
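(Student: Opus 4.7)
The plan is to mirror the proof of parts (a) and (b) of Lemma \ref{bigons}, with the tube relations of Lemma \ref{lemabcd}.d playing the role of the band relations. For each of (a) and (b) I would exhibit explicit mutually inverse degree-zero cobordisms $\Phi : G \to G'$ and $\Psi : G' \to G$, where $G$ denotes the square on the left and $G'$ its resolution on the right, and then invoke functoriality of $\V$.

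The cobordisms $\Phi$ and $\Psi$ arise by cutting the two surfaces of a tube relation (Figure \ref{tube}) transversely across the middle. Concretely, $\Phi$ pinches both $2$-labelled edges of the square into saddles and caps off the resulting $2$-labelled membranes, leaving the two $1$-labelled strands of $G'$, while $\Psi$ reinserts those membranes at the corresponding bindings. The composition $\Psi \circ \Phi$ reassembles the full tube cobordism, which by Lemma \ref{lemabcd}.d equals $\pm \mathrm{id}_{\V(G)}$. The reverse composition $\Phi \circ \Psi$ produces on each arc of $G'$ a closed $2$-labelled bubble of the form appearing in Figure \ref{bubbles}, which by Lemma \ref{lemabcd}.b equals $\pm \mathrm{id}_{\V(G')}$. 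Part (b) is handled identically, using the second picture of Figure \ref{tube}.

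The main obstacle is tracking signs. Each of the two bindings of the square carries an order on its $1$-labelled pages, fixed locally by the plane convention of Figure \ref{trivalent}, and both the tube and bubble relations depend on this order. I would verify case by case that, for the orientation patterns in (a) and (b), the signs from the two tube applications combine with those from the bubble evaluations to give $+1$, so that $\Phi$ and $\Psi$ are genuine mutual inverses; any residual global sign can be absorbed into the definition of $\Psi$ without affecting the statement. Grading is automatic: $\deg(\Sigma) = \chi_1(\Sigma) - 2\sharp\mathrm{pts}$ applied to $\Phi$ and $\Psi$ gives degree zero, since no points are inserted and the $1$-labelled Euler characteristic is preserved on each side.
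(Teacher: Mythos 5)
Your proposal takes essentially the same route as the paper: the paper's proof likewise exhibits explicit cobordisms between the square and its resolution (its Figures \ref{squaresf} and \ref{squaresg}) and observes that both composites are the identity up to sign by the relations of Lemma \ref{lemabcd}. Your identification of the tube and bubble relations as the ones doing the work, together with the sign and grading bookkeeping, is a faithful elaboration of that argument.
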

\begin{proof}
 Isomorphisms in a) are depicted in figure \ref{squaresf}. Both compositions give identity up to sign. One can be seen using relations in Lemma \ref{lemabcd}.
Isomorphisms in b) are described in figure \ref{squaresg}.
\end{proof}
\begin{figure}
\centerline{\includegraphics[height=32mm]{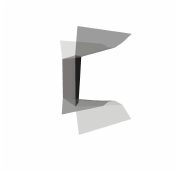}
\raisebox{3mm}{\includegraphics[height=30mm]{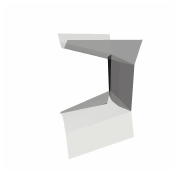}}
}
\caption{\label{squaresf} Isomorphisms in Lemma \ref{squares}a)}
\end{figure}
\begin{figure}
\centerline{\includegraphics[height=33mm]{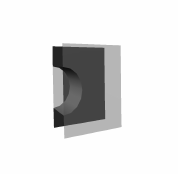}
\includegraphics[height=30mm]{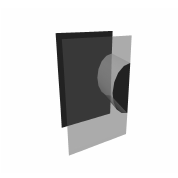}
}
\caption{\label{squaresg} Isomorphisms in Lemma \ref{squares}b)}
\end{figure}

\section{Khovanov homology}
\subsection{Jones polynomial via planar graphs}
The formulas below extend the preceeding $sl(2)$ invariant
of planar trivalent graphs to an invariant of link diagrams.
\begin{center}
\vspace{5pt}
\raisebox{5mm}{$\langle$}
\includegraphics[height=12mm]{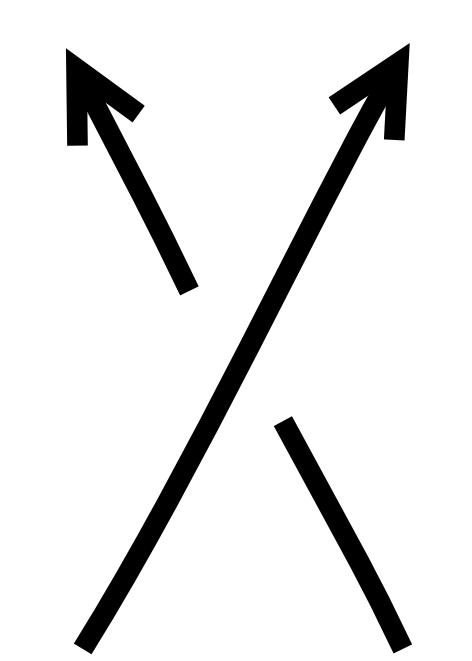}\raisebox{5mm}{$\rangle\ =\ q^{-1} \ \langle$}\includegraphics[height=12mm]{G/c0}\raisebox{5mm}{$\rangle\ -\ q^{-2} \langle$}\includegraphics[height=12mm]{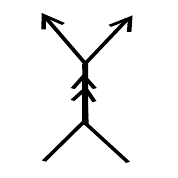}\raisebox{5mm}{$\rangle$}\\
\vspace{5pt}
\raisebox{5mm}{$\langle$}
\includegraphics[height=12mm]{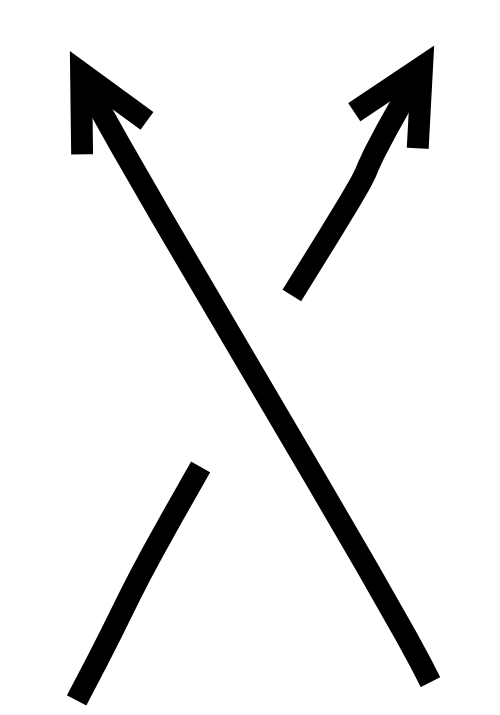}\raisebox{5mm}{$\rangle\ =\ q
\ \langle$}\includegraphics[height=12mm]{G/c0}\raisebox{5mm}{$\rangle\ -\ q^2 \langle$}\includegraphics[height=12mm]{G/arete_double.pdf}\raisebox{5mm}{$\rangle$}
\end{center}
The normalisation for the empty link is $1$, and we have the following skein relation
\begin{equation}\label{skeinJones}\text{
{$q^{2}\langle$}
\raisebox{-5mm}{\includegraphics[height=12mm]{G/c+}}{$\rangle\ 
-\ q^{-2} \langle$}\raisebox{-5mm}{\includegraphics[height=12mm]{G/c-}}{
$\rangle
=\ (q-q^{-1})\ \langle$}\raisebox{-5mm}{\includegraphics[height=12mm]{G/c0}}{$\rangle$}
}\end{equation}
Up to normalisation, we recognize the Jones polynomial with the change of variable $q=-t^{-\frac{1}{2}}$. A global state sum formula for a link represented by a diagram $D$ is given below. Note that it is quite easy to show that this formula is invariant under Reidemeister
move; this is a slight variant of the Kauffman bracket construction.

We give a global state sum formula for a diagram $D$. A state $s$ of $D$ associates to a positive (resp. negative) crossing either $0$ or $1$
(resp. $-1$ or $0$). For a state $s$,
$D_s$ is the planar trivalent graph, defined by the rule:\\
\centerline{if $s(c)=0$, then $c$ is replaced by 
\raisebox{-4mm}{\includegraphics[height=12mm]{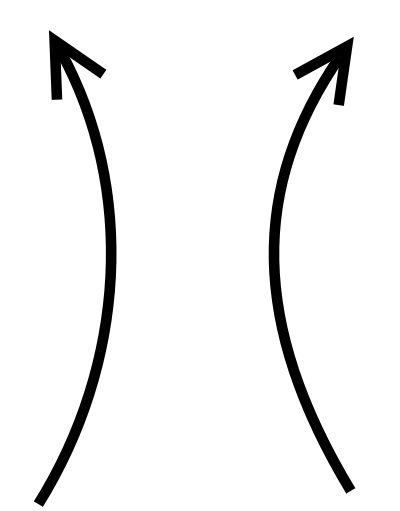}}}
\centerline{if $|s(c)|=1$, then $c$  is replaced by 
\raisebox{-4mm}{\includegraphics[height=12mm]{G/arete_double.pdf}}}

One has
\begin{equation}
 \label{globalState}
\langle D\rangle=\sum_s q^{-(w(D)+s(D))}\langle D_s\rangle
\end{equation}
Here $w(D)=\sum_c\mathrm{sign}(c)$, and $s(D)=\sum_cs(c)$.
\subsection{Khovanov complex}
We consider a link diagram $D$.
For a state  $s$, we define the trivalent graph $D_s$ according
to the local rules described just above.  We use the notation $d_s$ for $\sum |s(c)|$, and
$\Delta_s$ for the free abelian group generated by crossings  $c$ with $|s(c)|=1$.
 The Khovanov complex is a bigraded abelian group $K(D)$ defined below, together with a convenient boundary operator.
 \begin{equation}K(D)=\bigoplus_s \ V(D_s)\{-\sum_c(\mathrm{sign}(c)+s(c))\}\otimes \wedge^{d_s}\Delta_s
 \end{equation}
The cohomological degree $s(D)=\sum_c s(c)$ will be called the height, and the graded degree, equal to the one in the TQFT functor $\V$, up to a shift,
 will simply be called the degree.  
 The shift from the TQFT degree is prescribed by the integer between braces in such a way that
$$\text{q-dim}(G\{i\})=q^i\text{ q-dim}(G)\ .$$
It is convenient to give a local description of the complex. Here we implicitely extend the definition of $K$
to trivalent diagrams, where only $1$-labelled edges are allowed for crossings.
\begin{center}
\vspace{5pt}
\raisebox{5mm}{$K($}
\includegraphics[height=12mm]{G/c+}\raisebox{5mm}{$)\ =\  K($}\includegraphics[height=12mm]{G/c0}\raisebox{5mm}{$)\left\{-1\right\}\ \oplus\  K($}\includegraphics[height=12mm]{G/arete_double.pdf}\raisebox{5mm}{$)\left\{-2\right\} $}\\
\vspace{5pt}
\raisebox{5mm}{$K($}
\includegraphics[height=12mm]{G/c-}\raisebox{5mm}{$)\ =\  K($}\includegraphics[height=12mm]{G/arete_double.pdf}\raisebox{5mm}{$)\left\{2\right\}\ \oplus\  K($}\includegraphics[height=12mm]{G/c0}\raisebox{5mm}{$)\left\{1\right\} $}\\
\end{center}

The boundary operator between summands indexed by states $s$ and $s'$ is zero unless $s$ and $s'$
are different only in one crossing $c$, where $s'(c)=s(c)+1$.
 For a positive crossing (resp. a negative crossing) it is then defined using the TQFT map
associated with the cobordism $\Sigma$, (resp. $\Sigma'$) which are identity outside a neighbourhood of the crossing,
and are given by a saddle with $2$-labelled membrane, around the crossing $c$ with  $s(c)=0$, $s'(c)=1$ (resp.    $s(c)=-1$, $s'(c)=0$).

\centerline{\hspace{-1cm}$\Sigma :$\raisebox{-1.5cm}{\includegraphics[height=38mm]{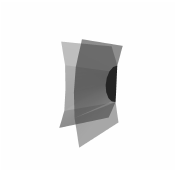}}\ \ \ \ $\Sigma' :$
\raisebox{-1.3cm}{\includegraphics[height=30mm]{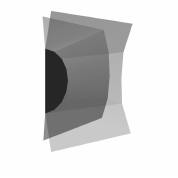}}
}
For a positive crossing $c$:
$$\delta=\V(\Sigma)\otimes (\bullet \wedge c): \V(D_s)\otimes\wedge^{d_s}\Delta_s \rightarrow  
\V(D_{s'})\otimes\wedge^{d_{s'}}\Delta_{s'}$$

For a negative crossing $c$,
$$\delta=\V(\Sigma')\otimes <\bullet,c>: \V(D_{s})\otimes\wedge^{d_{s}}\Delta_{s} \rightarrow  \V(D_{s'})\otimes\wedge^{d_{s'}}\Delta_{s'}$$
Here $<\bullet,c>$ is (the antisymmetrization of) the contraction (using the standard scalar product we understand $c$ as a form).

\begin{thm}
a) $(K(D),\delta)$ is a graded complex.\\
b) If the diagrams $D$ and $D'$ are related by a Reidemeister move, then there exists an graded homotopy equivalence between the complexes $K(D)$ and $K(D')$.\\
c) The graded Euler characteristic is equal to the quantum invariant $\langle D \rangle$, i.e
to $q+q^{-1}$ times the Jones polynomial with change of variable $q=-t^{-\frac{1}{2}}$.
\end{thm}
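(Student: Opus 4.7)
The plan is standard in Khovanov-style categorification, transplanted to the oriented TQFT framework set up above.

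For (a), I first check that $\delta$ is homogeneous of bidegree $(1,0)$: the cohomological degree $+1$ comes from wedging or contracting with $c$, and the $q$-degree is preserved because the shift $\{-\sum_c(\mathrm{sign}(c)+s(c))\}$ in the definition of $K$ grows by $1$ exactly when $s(c)$ does, compensating the degree $-1$ of the $2$-labelled saddle of Figure \ref{saddlem}. Next, $\delta^2=0$ reduces in the usual Khovanov fashion to the statement that, for every pair of crossings $c_1, c_2$ that can each be flipped from a state $s$, the two TQFT compositions obtained by flipping $c_1$ then $c_2$, or vice versa, are equal; the sign built into the exterior-algebra factor $\wedge^\bullet\Delta_s$ then converts this commutativity into $\delta^2=0$. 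The commutativity itself is automatic by functoriality of $\V$, since the two composite cobordisms are supported in disjoint disks around $c_1$ and $c_2$ and are therefore oriented-homeomorphic rel boundary.

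For (b), I exhibit explicit chain homotopy equivalences for each Reidemeister move, local to the neighborhood where the move is applied. For R1, the local two-term complex has one summand given by the diagram with a $2$-labelled membrane bubble attached; the bubble relations of Lemma \ref{lemabcd}(b) split this summand into a contractible part and a part isomorphic to $K$ of the straight strand, with the $q$-shifts balanced by the $w(D)$-dependent shift. For R2, the middle summand of the four-state complex is decomposed by Lemma \ref{bigons}(c) into two pieces, one cancelling the initial term via the isomorphism induced by $\delta$ and the other cancelling the terminal term, leaving exactly $K$ of the simplified diagram. For R3 I compare the eight-state complexes on the two sides, using Lemma \ref{squares} to identify cancelling acyclic subcomplexes and to match what remains on the two sides; as in Bar-Natan's treatment of the original Khovanov complex, this can equivalently be obtained by bootstrapping from R2 once the R2-equivalence is natural enough.

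For (c), the graded Euler characteristic unfolds as
\[\chi_q(K(D))=\sum_s (-1)^{s(D)}\, q^{-(w(D)+s(D))}\, \chi_q\bigl(\V(D_s)\bigr).\]
By the Proposition preceding this theorem, $\chi_q(\V(D_s))=\langle D_s\rangle$, and the alternating sign combined with the $q$-shift reproduces the coefficients of the local skein expansion of $\langle \cdot \rangle$ at each crossing, so the formula collapses to \eqref{globalState}. The identification with the Jones polynomial up to the factor $q+q^{-1}$ then follows from the skein relation \eqref{skeinJones} and the change of variable $q=-t^{-1/2}$.

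The main obstacle is part (b) for R3: the two sides together yield sixteen state summands, and matching them requires not only the square relations of Lemma \ref{squares} but also the signed bubble, band, and tube relations of Lemma \ref{lemabcd} applied in tandem. Keeping the signs coherent through the identification demands careful tracking of the chosen order on the $1$-labelled pages at each binding, a bookkeeping step peculiar to the oriented model with no direct counterpart in the original Kauffman-bracket-based construction.
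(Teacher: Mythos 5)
Your proposal is correct and follows essentially the same route as the paper: for (a) the observation that the two composite saddle cobordisms agree as TQFT maps while the wedge/contraction twisting makes the squares anticommute (the paper just spells out the three sign cases positive--positive, negative--negative, mixed); for (b) explicit local homotopy equivalences built from the bubble, bigon, and square relations, with R3 handled by cancelling an acyclic subcomplex via Gauss reduction and recomputing the twisted boundary; and for (c) the identification of the graded Euler characteristic with $\langle D\rangle$ (you via the global state sum \eqref{globalState}, the paper via the local skein expansion --- these are equivalent).
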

We will use the notation $Kh(D)$ for the homology of the complex $K$. This theorem says that the
isomorphism class of the graded group $Kh(D)$ is an invariant of the isotopy class of the corresponding link.
 We will see later that for a fixed link $L$ (not considered up to isotopy), then $Kh(L)$
is well defined.
\begin{proof}
We first prove a).
 The map $\partial$ increases the height by one. The elementary cobordism given by a saddle has Euler caracteristic $-1$. The corresponding TQFT map has degree $+1$, and the map $\partial$ on the shifted
TQFT groups has degree $0$. We want now to compute $\partial\circ\partial$.
 The possibly non trivial contributions comes from squares corresponding to states $s$ and $s''$ identical
on all crossings excepted $c_1$ and $c_2$, where $s''(c)=s(c_1)+1$ and $s''(c_2)=s(c_2)+1$.
 We have two intermediate states $s'_1$ ($s'_1(c_1)=s(c_1)+1$ and $s'_1(c_2)=s(c_2)$)    and $s'_2$
 ($s'_2(c_1)=s(c_1)$ and $s'_2(c_2)=s(c_2)+1$) given two contributions represented by the same cobordism $\Sigma$
with two saddles (for the TQFT maps, squares commute). Each of them is twisted. We have two check that 
after twisting the two contributions vanish (squares anticommute) in all cases.\\
If $c_1$ and $c_2$ are both positive crossings, then we get
$$\V(\Sigma)\otimes \left(\bullet\wedge c_1\wedge c_2 +\bullet\wedge c_2\wedge c_1 \right)=0\ .$$
If $c_1$ and $c_2$ are both negative crossings, then we get
$$\V(\Sigma)\otimes \left(<\bullet, c_1\wedge c_2>+<\bullet,c_2\wedge c_1 >\right)=0\ .$$
If $c_1$ is a positive crossing and $c_2$ is a negative crossing, then we get
$$\V(\Sigma)\otimes \left(<\bullet\wedge c_1, c_2>+<\bullet\wedge c_1>\wedge c_2 \right)=0\ .$$

The graded Euler characteristic of the complex $K(D)$ satisfies the Jones skein relation (\ref{skeinJones}), and
is equal to $q+q^{-1}$ for the trivial diagram. Statement c) follows. In the next subsections we will construct homotopy equivalences for each Reidemeister move, and obtain b).
\end{proof}

\subsection{Reidemeister move I}
We first consider the case of a positive crossing.
$$K(\png{3}{-8}{25}{G/curl+})=
\left[K(\png{3}{-8}{25}{G/curl0})
\stackrel{\delta}{\longrightarrow} 
K(\png{3}{-8}{25}{G/curl1})
\right]
$$
Recall that the map $\delta$ is equal to $\V(\Sigma_\delta)\otimes( \bullet \wedge c)$ where $\Sigma_\delta$ contains a saddle with membrane as depicted in
figure \ref{sad0}.
\begin{figure}
 \centerline{\Png{2.5cm}{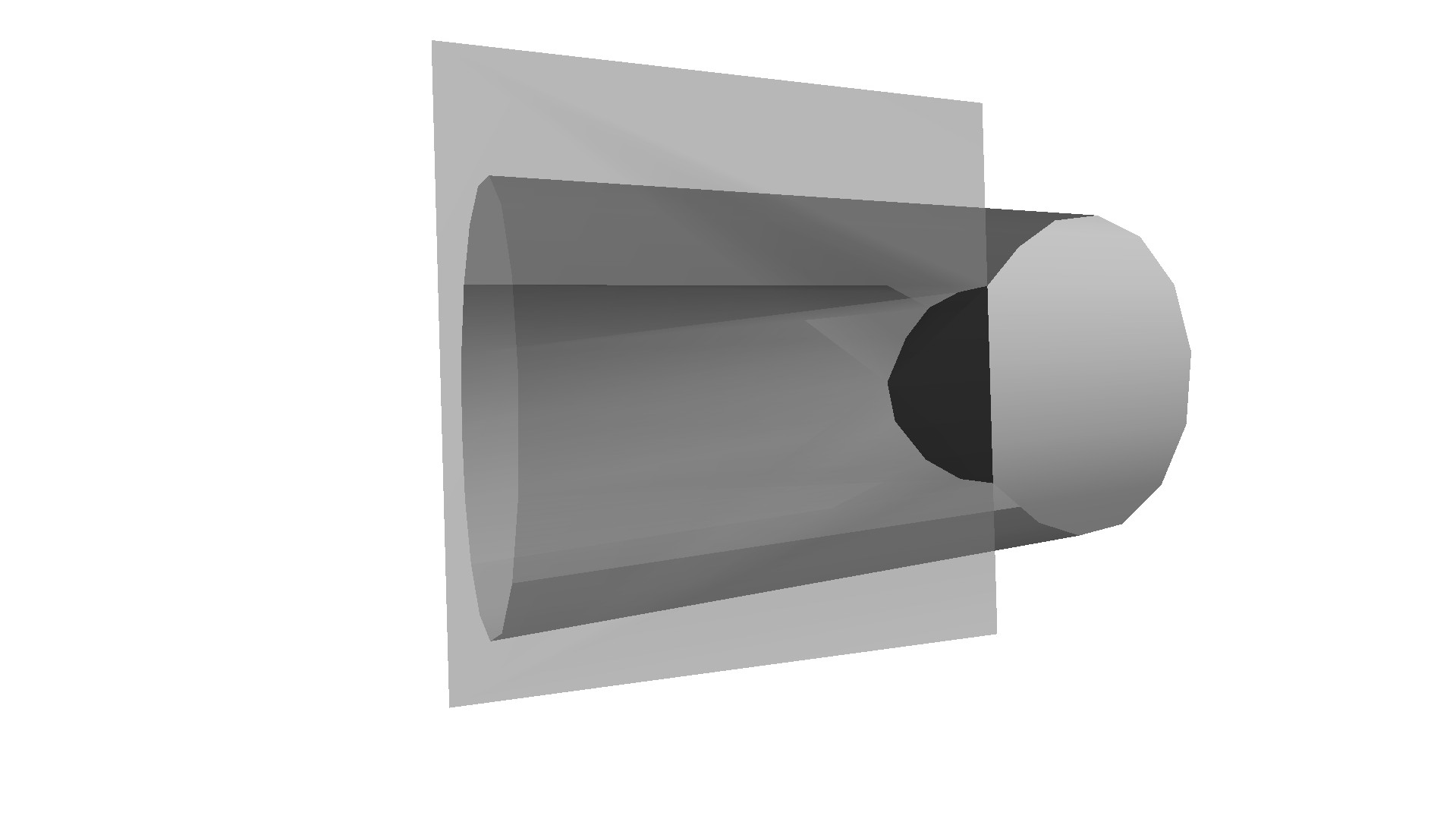}}
 \caption{\label{sad0}}
 \end{figure}
Consider the followings maps.
 $$f:K(\png{3}{-8}{25}{G/curl0})\rightarrow K(\png{3}{-8}{25}{G/un})$$ is the TQFT map associated with 
 the
 cobordism in figure \ref{cob_f}.
 \begin{figure}
\centerline{\Png{2.5cm}{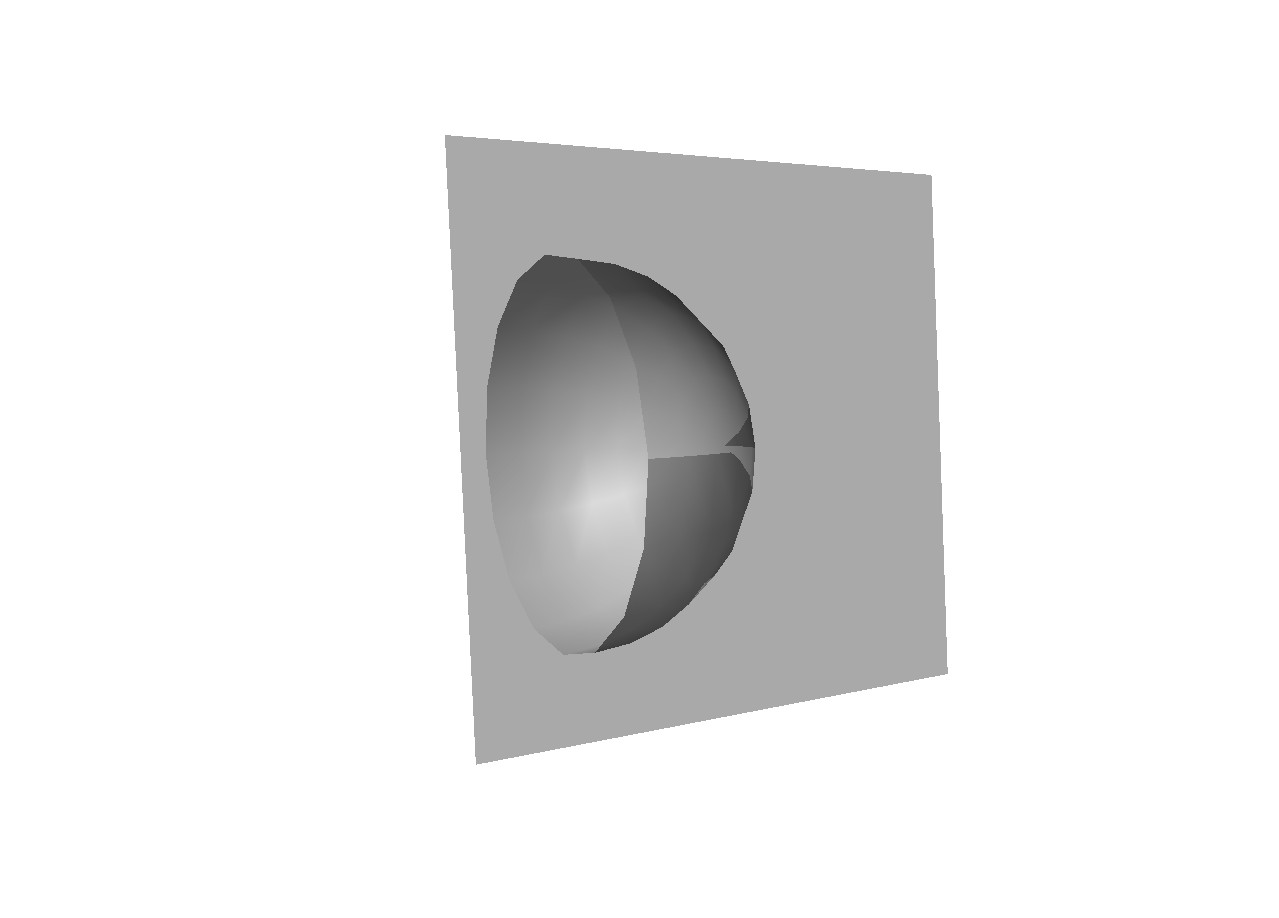}}
 \caption{\label{cob_f}}
 \end{figure}

$$g:K(\png{3}{-8}{25}{G/un})\rightarrow K(\png{3}{-8}{25}{G/curl0})  $$ is the sum of the TQFT maps associated with the
 cobordisms in figure \ref{cob_g}.
 \begin{figure}
 \centerline{\hfill\Png{2.5cm}{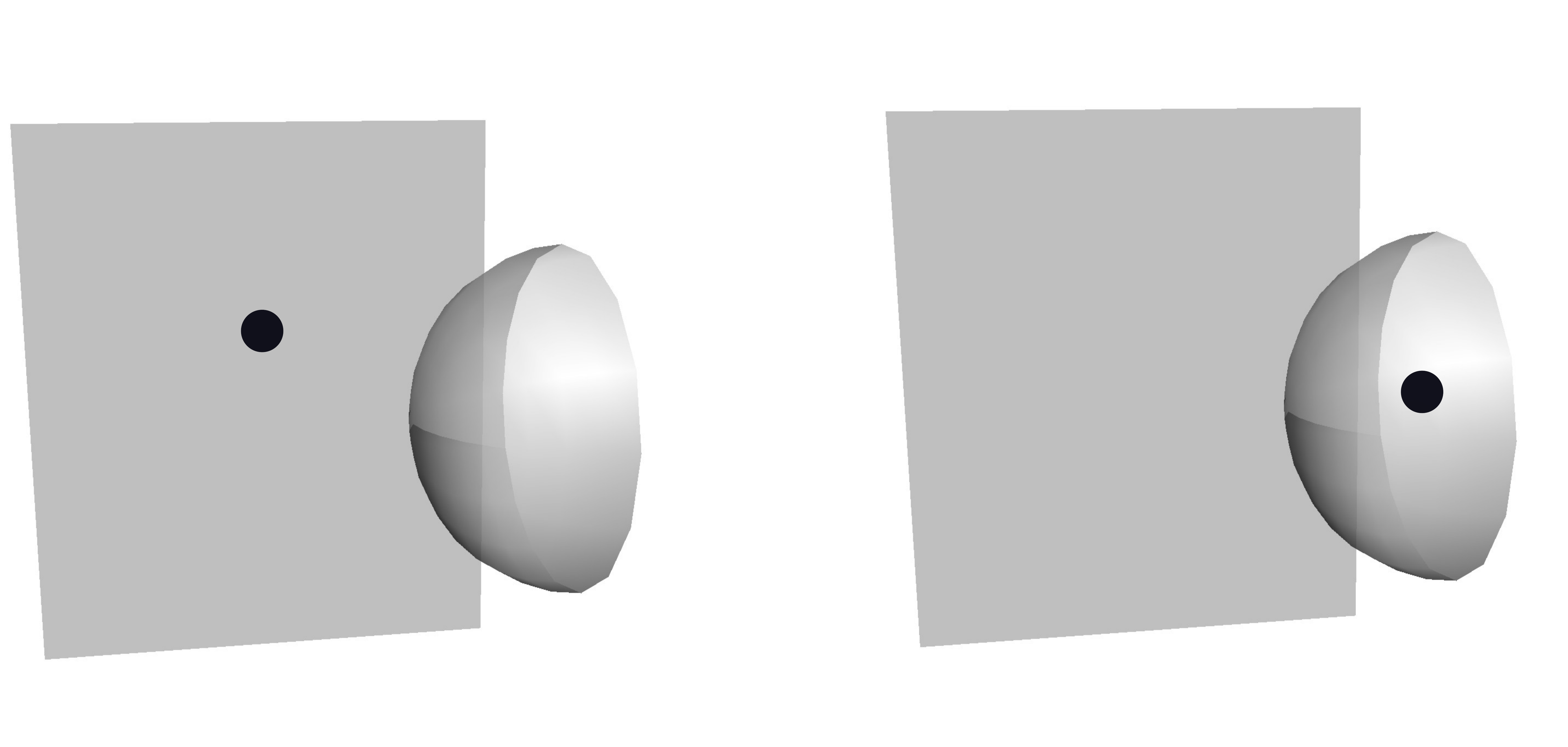}
}
 \caption{\label{cob_g}}
 \end{figure}
 $$D: K(\png{3}{-8}{25}{G/curl1}\rightarrow K(\png{3}{-8}{25}{G/curl0})$$
is equal to $-\V(\Sigma_D)\otimes <\bullet,c> $ where $\Sigma_D$ is  depicted in
figure \ref{D}.
\begin{figure}
 \centerline{\Png{3cm}{G/cobg2}}
 \caption{\label{D}}
 \end{figure}

We have that
$$\delta\circ g=0\ .$$
Hence $f$ and $g$ define chain maps.
We have that
$$f\circ g=Id$$
$$Id-g\circ f=D\circ\delta \ \text{ and }\ \delta\circ D=Id\ .$$
Hence we have that $D$ is an homotopy between $Id$ and $g\circ f$.\\[10pt]

We consider now the case of a negative crossing.
$$K(\png{3}{-8}{25}{G/curl-})=
\left[K(\png{3}{-8}{25}{G/curl1})
\stackrel{\delta}{\longrightarrow} 
K(\png{3}{-8}{25}{G/curl0})
\right]
$$
Here the map $\delta$ is equal to $\V(\Sigma_\delta)\otimes <\bullet , c>$ where $\Sigma_\delta$ is a saddle. 
Consider the followings maps.
 $$f:K(\png{3}{-8}{25}{G/curl0})\rightarrow K(\png{3}{-8}{25}{G/un})$$ is the TQFT map associated with 
 the
 cobordism in figure \ref{cob_f}.

$$g:K(\png{3}{-8}{25}{G/un})\rightarrow K(\png{3}{-8}{25}{G/curl0})  $$ is the sum of the TQFT maps associated the
 cobordisms in figure \ref{cob_g}. 
$$D: K(\png{3}{-8}{25}{G/curl1}\rightarrow K(\png{3}{-8}{25}{G/curl0})\ .$$
is equal to $\V(\Sigma_D)\otimes <\bullet,c> $ where $\Sigma_D$ is  depicted in
figure \ref{D1}
\begin{figure}
 \centerline{\Png{3cm}{G/cobg0}}
 \caption{\label{D1}}
 \end{figure}

We have that
$$f\circ\delta=0\ .$$
Hence $f$ and $g$ define chain maps.
We have that
$$f\circ g=Id$$
$$Id-g\circ f=\delta \circ D\ \text{ and }\ D\circ\delta=Id$$
Hence we have that $D$ is an homotopy between $Id$ and $g\circ f$.
\subsection{Reidemeister II${}_+$}
The complexes we want to consider are described in the diagram below.
$$K\left(\raisebox{-8mm}{\includegraphics[height=20mm]{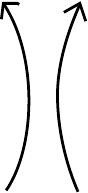}}\right)\ \ ,\ \ 
K\left(\raisebox{-8mm}{\includegraphics[height=20mm]{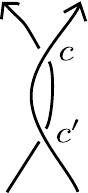}}\right)
=\left[
\raisebox{-23mm}{\includegraphics[height=50mm]{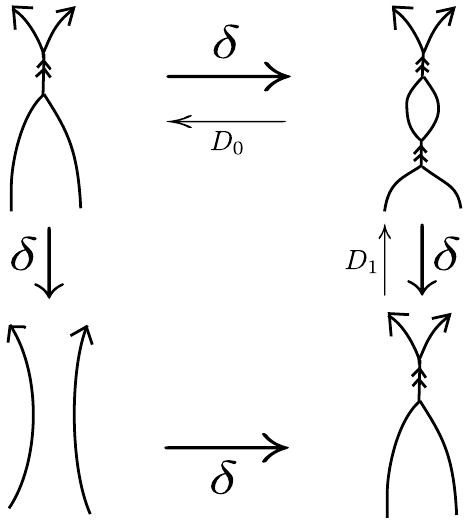}}\right]
$$
Inverse homotopy equivalences are given by the maps $f$ and $g$ defined below
$$f=\one_{\raisebox{-3mm}{\includegraphics[height=6mm]{G/r2+l.pdf}}}\ \oplus\  \V(Z)\otimes (\bullet\wedge c\wedge c')$$
$$g=\one_{\raisebox{-3mm}{\includegraphics[height=6mm]{G/r2+l.pdf}}}\ \oplus\  \V(Z')\otimes <\bullet,c\wedge c'>$$
Here $Z$ and $Z'$ are the trivalent surfaces depicted in figure \ref{ZZZ}
One can check that $\delta\circ f=0$ and $g\circ \delta=0$  ,
so that $f$ and $g$ define chain maps. We have $f\circ g= Id$, moreover there exists $D_0$, $D_1$ as depicted in the diagram above such that 
$$\delta \circ D_1=Id\ ,\ D_0\circ \delta=Id, \ g\circ f+D_1\circ \delta+\delta \circ D_0=Id\ .$$
\begin{exo}
 Find $D_0$, $D_1$ as expected (hint: use the $4$ terms relation in ...).
\begin{figure}
 \centerline{\raisebox{1.4cm}{$Z:$}\hspace{-4cm}\Png{3cm}{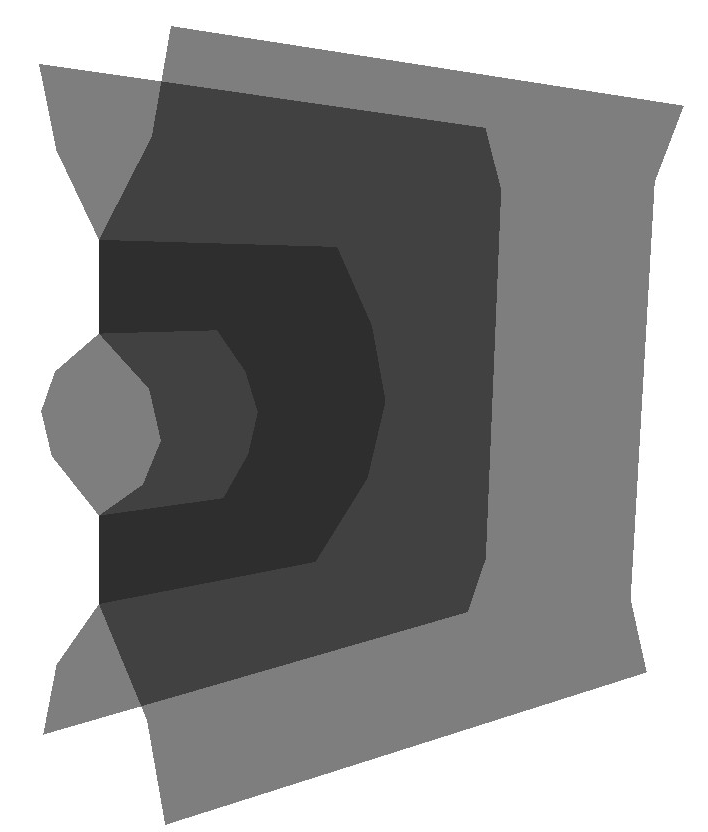}\hspace{-2cm}\raisebox{1.4cm}{$Z':$}\hspace{-5cm} \Png{3cm}{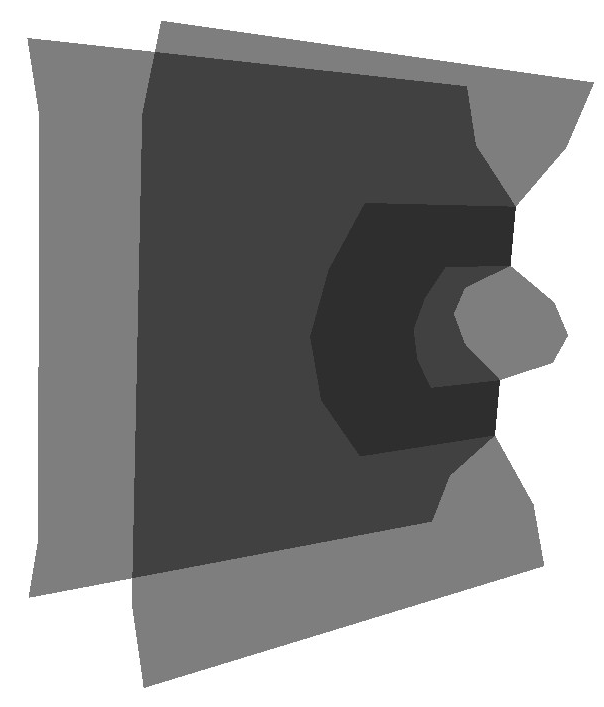}\hspace{-4cm}}
 \caption{\label{ZZZ}}
 \end{figure}

\end{exo}
\subsection{Reidemeister II${}_-$}
Homotopy equivalences for negative Reidemeister move are defined in a similar way. The homotopy equivalence checking rests essentially on the relation in Lemma \ref{squares}a.
\subsection{Reidemeister III}
We have to consider the complex decomposed as a cube as described below, and the symmetric one.
$$
K\left(\raisebox{-8mm}{\includegraphics[height=20mm]{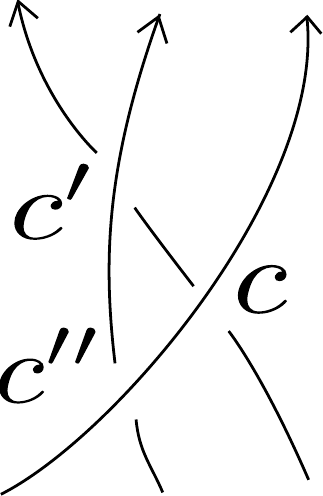}}\right)
=\left[
\raisebox{-28mm}{\includegraphics[height=60mm]{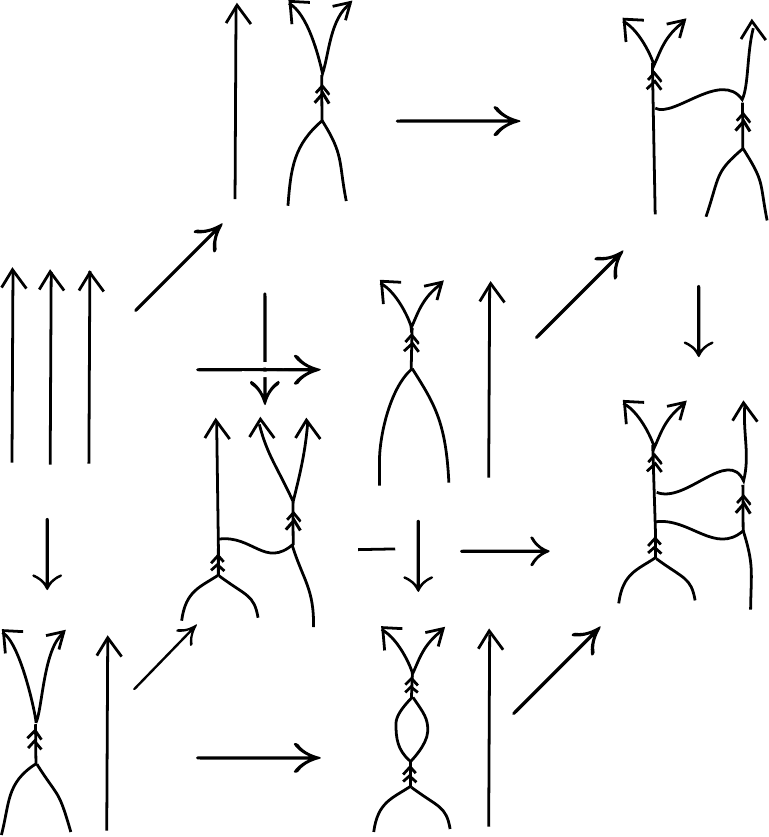}}
\right]
$$
We can find an acyclic subcomplex.
\begin{lem}
 The subcomplex described below is acyclic.
$$\left[
\raisebox{-8mm}{\includegraphics[height=20mm]{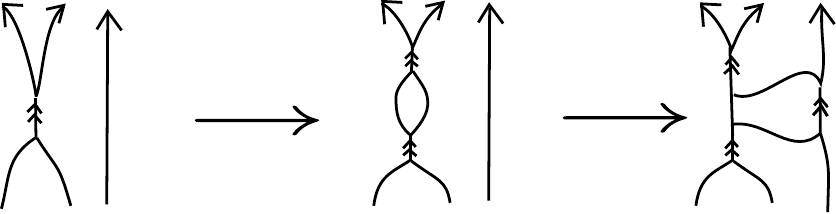}}
\right]
$$
\end{lem}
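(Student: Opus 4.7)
The plan is to show that the displayed subcomplex is a two-term complex whose differential is an isomorphism, making it the mapping cone of an isomorphism and therefore contractible (in particular acyclic). Both vertices in the subcomplex should correspond to planar trivalent graphs that, in the neighbourhood of the relevant portion of the $R3$ tangle, contain a ``square'' face of the kind treated in Lemma \ref{squares}. By that lemma the module $\V$ of each such graph is naturally isomorphic (up to grading shift) to the module $\V$ of the same simpler reference graph obtained by collapsing the square.

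First, I would pin down the two terms of the subcomplex and the trivalent graphs indexing them, and write out the square-collapsing isomorphisms of Lemma \ref{squares} for each term, so that both sides are identified with $\V$ of a common reference graph up to a uniform grading shift. Next, I would replace the differential $\delta$, which is by construction $\V(\Sigma_\delta)$ tensored with a wedge or contraction operation on $\Delta_s$, by the composite cobordism obtained by precomposing and postcomposing with the square isomorphisms of Figures \ref{squaresf} and \ref{squaresg}. The resulting trivalent surface can be simplified locally using the band and tube relations from Lemma \ref{lemabcd}c) and d), together with the bigon relation (Lemma \ref{bigon}), and should reduce to $\pm$ the identity cobordism on the reference graph.

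Once the differential is identified with $\pm \mathrm{Id}$, acyclicity is immediate: the subcomplex becomes $\V(G)\{k\} \xrightarrow{\pm \mathrm{Id}} \V(G)\{k\}$ (after the grading shifts are accounted for by the shifts prescribed in Lemma \ref{squares}c) and in the definition of $K$), and any two-term complex with invertible differential is null-homotopic. The cohomological degrees line up because the saddle changes the height by one while each square isomorphism preserves it, and the grading shifts are absorbed into the $\{-1\},\{1\}$ shifts appearing in the bigon/square decompositions.

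The main obstacle is the bookkeeping of signs and gradings when composing the square isomorphisms with the saddle-with-membrane cobordism defining $\delta$: one must carefully track the order of the $1$-labelled pages at each trivalent vertex, the sign from moving a point across a binding (Lemma \ref{lemabcd}a), and the signs absorbed in the identifications of Lemma \ref{squares}a) and b). Once the simplified composite is shown to be a cylinder, possibly with a point that can be pushed across a binding, the identification of $\delta$ with $\pm\mathrm{Id}$, and hence the acyclicity of the subcomplex, follows.
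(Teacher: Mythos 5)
The paper states this lemma without proof, so there is no argument of the author's to compare against; your strategy --- exhibit the subcomplex as a two-term complex whose differential is an isomorphism, hence the cone of an isomorphism and therefore acyclic --- is the standard one and is exactly what the subsequent Gauss reduction presupposes. Your plan is sound, but be aware of two points. First, whether the relevant identification of the two terms comes from Lemma \ref{squares} or from the bigon decomposition of Lemma \ref{bigons}c) depends on which two vertices of the cube the figure actually selects; in either case the mechanism is the same (conjugate the saddle-with-membrane by the collapsing isomorphisms and simplify with Lemma \ref{lemabcd}), so this is a matter of reading the picture, not a gap in the method. Second, the entire mathematical content of the lemma is the computation you defer --- verifying that the simplified composite cobordism really is $\pm\mathrm{Id}$ rather than, say, a map that is only an isomorphism onto a summand --- so a complete write-up must carry out that cobordism calculus explicitly, including the twisting on $\wedge^{d_s}\Delta_s$ (which is harmless here since the relevant exterior powers are rank one and wedge/contraction by $c$ is an isomorphism between them). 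With that computation supplied, your argument is complete.
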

We then apply a Gauss reduction (see e.g. ...) and obtain
an homotopy equivalence with a smaller comlex. Note that the above acyclic subcomplex is not a direct summand so that we have to
carefully recalculate the boundary map including the action on the twisting determinant. The result is described below
Here the boundary maps are given by a saddle with
$2$ labelled membrane as before twisted as indicated near the arrows.
$$
K\left(\raisebox{-8mm}{\includegraphics[height=20mm]{G/r3l.pdf}}\right)
\simeq\left[
\raisebox{-18mm}{\includegraphics[height=40mm]{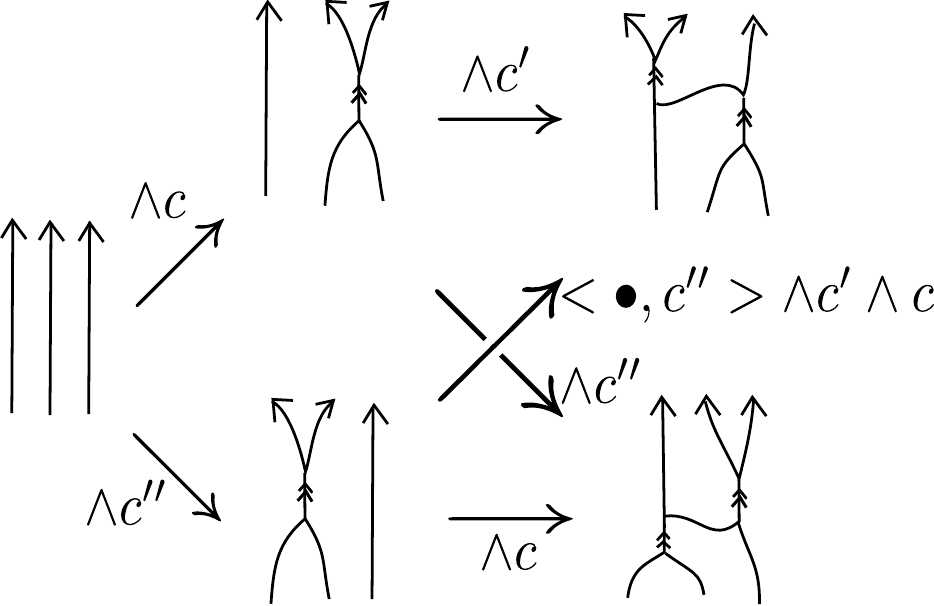}}
\right]
$$
We have 
$$
K\left(\raisebox{-8mm}{\includegraphics[height=20mm]{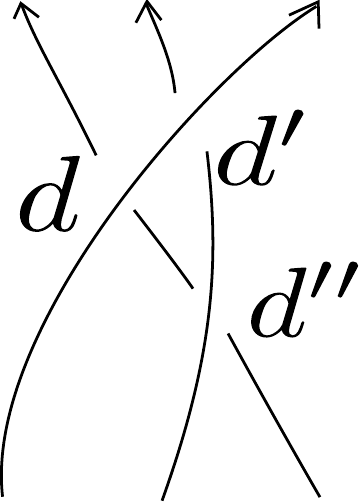}}\right)
\simeq\left[
\raisebox{-18mm}{\includegraphics[height=40mm]{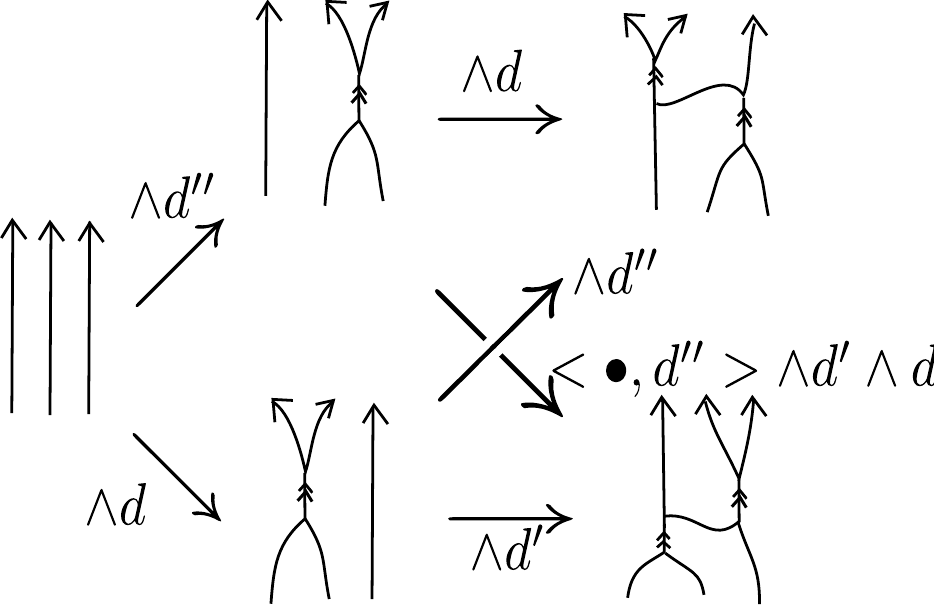}}
\right]
$$
An isomorphism between the two complexes is obtained by using the idendity map on the TQFT modules,
and $ c\mapsto d''\ ,\ c'\mapsto d'\ ,\ c''\mapsto d$ on the twisting determinants.
\section{Lee-Rasmussen spectral sequence}
Following Lee and Rasmussen, we consider now the Frobenius algebra
\mbox{$\mathbf{A'}=\Z[X]/X^2-1$}, and the associated TQFT functor $\V'$.
The preceeding construction applies as well, excepted that here the TQFT functor $\V'$ is no more
graded, but filtered. A generator of a TQFT module $\V'(\gamma)$ is given the same degree
as before, and $F^j(\V'(\gamma))$ is spanned by generators with degree less or equal to $j$.
 Observe that a cobordism $\Sigma$ induces a filtered map, with degree equal to
\mbox{$\chi(\Sigma)-2(\sharp \mathrm{points})$}.

We consider the filtered abelian group $K'(D)$ defined below,
with the same notation as before.
 \begin{equation}K'(D)=\bigoplus_s \V'(D_s)\{-\sum_c(\mathrm{sign}(c)+s(c))\}\otimes \wedge^{d_s}\Delta_s
 \end{equation}
The boundary operator $\partial'$ is still defined by twisting the TQFT map associated with a saddle.  We denote by $Kh'(D)$ the homology of this complex.
\begin{thm}
a) $(K'(D),\partial')$ is a filtered chain complex.\\
b) If the diagrams $D$ and $D'$ are related by a Reidemeister move, then there exists a filtered homotopy equivalence between the complexes $K'(D)$ and $K'(D')$.\\
c) There exists a spectral sequence whose second page is the homology of our complex $K$
$$E_2^{i,j}(D)=Kh^{i-j,j}(D)\ ,$$ which converges to ${Kh'}^*(D)$.
\end{thm}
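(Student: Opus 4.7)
The plan is to mirror the construction of Section 3.2 in the filtered setting, then reinterpret the Reidemeister arguments of Sections 3.3--3.6 as filtered constructions, and finally obtain the spectral sequence as the canonical one attached to a bounded filtered chain complex.

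For (a), I would first equip $K'(D)$ with the filtration induced by the $q$-filtration on $\V'$, shifted by $\{-\sum_c(\mathrm{sign}(c)+s(c))\}$ exactly as in the graded case. The saddle-with-membrane cobordism has the same filtration degree with respect to $\V'$ as its grading with respect to $\V$, so after shifts $\partial'$ is filtration-preserving. The identity $\partial'\circ\partial'=0$ then reduces, exactly as in Section 3.2, to the three sign cancellations (both crossings positive, both negative, mixed); that argument is purely combinatorial in the wedge and contraction operations on $\wedge^{d_s}\Delta_s$ and is therefore independent of the choice of Frobenius algebra.

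For (b), I would reuse verbatim the chain maps $f,g$ and homotopies $D,D_0,D_1$ of Sections 3.3--3.6; the same trivalent cobordisms now induce filtered maps of the same total filtration degree. The one point to check is that the local identities underlying those arguments -- Lemma~\ref{lemabcd}, the bigon and $4$-term relations of Lemma~\ref{bigon}, and the square relations of Lemma~\ref{squares} -- still hold for $\V'$ in the filtered category. Since the Lee coproduct satisfies $\Delta(X)=X\otimes X + \mathbf{1}\otimes\mathbf{1}$, which differs from its Khovanov counterpart only by a term of strictly lower filtration, each identity holds up to a correction in strictly lower filtration. These corrections are absorbed by a standard filtration-induction: any filtered chain map whose associated-graded piece is a Khovanov homotopy equivalence lifts, filtration degree by filtration degree, to a filtered homotopy equivalence.

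For (c), I would invoke the canonical spectral sequence associated with the bounded filtered chain complex $(K'(D),\partial')$. Its associated graded complex coincides with the Khovanov complex $(K(D),\partial)$, because $\mathrm{gr}(\A')=\A$ as graded Frobenius algebras and because the filtration-preserving part of the saddle-with-membrane map is precisely the Khovanov saddle map. Consequently, with $i$ the total cohomological degree and $j$ the $q$-filtration, the $E_2$ page of the spectral sequence is $Kh^{i-j,j}(D)$, and convergence to ${Kh'}^*(D)$ follows from boundedness of the $q$-filtration on each height summand. The main obstacle I foresee lies in (b): the Reidemeister~III argument required an explicit Gauss reduction of an acyclic subcomplex together with careful bookkeeping of the determinantal twist, and in the filtered setting one must verify that this subcomplex is still filtration-acyclic and that Gauss reduction respects the filtration; once that is in place, part (c) is essentially formal.
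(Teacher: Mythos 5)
Your proposal is correct and follows essentially the same route as the paper, whose proof of this theorem consists only of the remark that a) and b) are ``proved as before'' and that c) follows from standard facts about filtered chain complexes. You supply the details the paper omits (in particular the observation that $\mathrm{gr}(\A')\cong\A$, that the cobordism maps' filtration-preserving parts recover the Khovanov maps, and the bounded-filtration lifting argument needed where the Lee coproduct introduces lower-order corrections), all of which are consistent with the paper's intent.
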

\begin{proof}
 a) and b) are proved as before. Statement c) follows from standard facts with filtered chain complexes.
\end{proof}

The theorem shows that all the pages with index greater or equal to $2$ are invariants
of the link. For Khovanov original homology, it was proved by Lee \cite{Lee} that the
limit depends only on the number of components. Rasmussen \cite{Ras} was able to extract  a lower bound for the slice genus and to use it to give a combinatorial proof of Milnor conjecture on the slice genus of torus knots.

We will compute our oriented version of Lee-Rasmussen homology over \mbox{$\Lambda=\Z[\frac{1}{2}]$} using
the Karoubi completion method of Bar-Natan and Morrisson \cite{BM}. We denote by $Kh'(D,\Lambda)$ the homology of $K'(D)\otimes \Lambda$.
\begin{thm}
For a link diagram $D$ with $m$ components, $Kh'(D,\Lambda)$ is a free $\Lambda$-module of rank $2^m$,
with a canonical basis indexed by maps $\epsilon : \pi_0(D)\rightarrow \{\pm 1\}$.
\end{thm}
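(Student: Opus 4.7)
The plan is to adapt the Bar-Natan--Morrison Karoubi-completion strategy to the trivalent setting. Over $\Lambda=\Z[\tfrac12]$ the Frobenius algebra $\mathbf{A}'\otimes\Lambda=\Lambda[X]/(X^2-1)$ splits as $\Lambda e_+\oplus \Lambda e_-$ via the orthogonal idempotents $e_\pm = \tfrac12(1\pm X)$, which satisfy $\overline{e_\pm}=e_\mp$ and $\epsilon(e_\pm)=\tfrac12$. I would work in the Karoubi completion of the additive category of chain complexes of trivalent surfaces so that every $1$-labelled circle appearing in a resolution $D_s$ decomposes into a direct sum of two \emph{coloured} circles, one for each idempotent.

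First I would carry out the local analysis at the $2$-labelled edges and trivalent vertices. The structural map $f(x\otimes y)=x\bar y$ satisfies $f(e_i\otimes e_j)=e_i\overline{e_j}$, which vanishes unless the colours $i,j$ are correlated by the orientation convention of Section~\ref{s1}. Together with the bubble and tube relations of Lemma~\ref{lemabcd}, this forces each $2$-labelled edge to impose a single-colour constraint on its two adjacent $1$-labelled pages. Consequently, for every state $s$, the only summands of $\V'(D_s)\otimes\Lambda$ that survive in the Karoubi completion correspond to colourings of the $1$-labelled curve that descend to a well-defined sign on each component of the underlying link $D$; other colourings are killed by the projector at some $2$-labelled edge.

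Second I would treat the differential, which on each summand is a saddle with $2$-labelled membrane. Fix a colouring $\epsilon\in\mathrm{Map}(\pi_0(D),\{\pm1\})$ and compute the saddle directly from the surgery formula of Figure~\ref{surgery}: in the coloured basis it identifies the two resolutions of each crossing up to a nilpotent piece, realising them as a contractible mapping cone. Gaussian elimination applied crossing by crossing collapses each coloured subcomplex to a complex concentrated in a single homological degree, whose underlying module is free of rank~$1$ over $\Lambda$. Summing over the $2^m$ colourings gives the claimed free $\Lambda$-module of rank $2^m$ with a basis indexed by $\mathrm{Map}(\pi_0(D),\{\pm1\})$.

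The main obstacle is sign bookkeeping: one has to verify that the twist by $\wedge^{d_s}\Delta_s$ and the point-crossing signs of Lemma~\ref{lemabcd}(a) are compatible with the idempotent decomposition, so that the acyclic pieces really split off as genuine summands rather than only up to a unit of higher order. Once this is in hand, canonicity of the basis follows from evaluating the Reidemeister homotopies of Section~3 on the idempotents $e_\pm$; the explicit cobordisms $Z$, $Z'$ and $\Sigma_D$ in the homotopies act diagonally on the coloured summands, so naturality of the basis is immediate.
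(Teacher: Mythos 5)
Your proposal follows essentially the same route as the paper: splitting $\mathbf{A}'\otimes\Lambda$ by the idempotents $\tfrac{\mathbf{1}\pm X}{2}$ in the spirit of Bar-Natan--Morrison, extending $\V'$ to coloured trivalent objects, and using that the coloured saddle maps are either zero or isomorphisms to retract onto a subcomplex of rank $2^m$ with vanishing differential, one generator per sign assignment on $\pi_0(D)$. The only caution is that since $\overline{e_\pm}=e_\mp$ the vertex constraint is an \emph{opposite}-colour (alternating) condition on the two $1$-labelled edges at each trivalent vertex, not a single-colour one, and the descent to a well-defined sign on each link component emerges only after combining this with the requirement that the saddle be nonzero exactly when colours agree along each $1$-labelled circle.
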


\begin{proof}
 The algebra $\mathbf{A}'$ contains the minimal idempotents
$$\pi_\pm=\frac{\mathbf{1} \pm X}{2}\ .$$
 Using these idempotents we extend the TQFT functor $\V'$ to an extended trivalent category where $1$-labelled
edges or faces may be colored with $\pi_\pm$. If $1$-labelled edges in a trivalent graph $\gamma$ are colored
with a sequence of signs denoted by $\epsilon$, then 
we obtain an object $\gamma(\epsilon)$ whose associated module is the image of the obvious  projector
$\pi_\epsilon\in \V'(\gamma)$, associated with $\epsilon$.
 The relation in Lemma \ref{lemabcd} a) still hold for the functor $\V'$.
We deduce that the module $\V'(\gamma)$ is zero if signs agree on the two $1$-labelled edges adjacent
to a vertex.
 For an {\em alternating} sign assignement $\epsilon$ , the  module $\V'(\gamma(\epsilon))$ has rank one with a basis represented by  a trivalent surface whith only discs (without points) as $2$-cells.
 The complex $K'(D)$ which was a sum indexed by states, is now decomposed into a sum indexed
by colored states (states with coloring of all arcs). The boundary map can be computed locally.
 The TQFT map associated to a saddle is zero unless all colors coincide on the arcs belonging to the same $1$-labelled component.
 In the remaining case, this TQFT map is an isomorphism. We obtain a deformation retract on a subcomplex $K''(D)$ where the boudary map is zero.
Locally, i.e. for a crossing, the subcomplex $K''(D)$ is described below.
$$ DESSIN$$
For a generator, the signs associated with arcs belonging to the same  component
of the represented link are the same. Moreover for an assignment of signs on the components,
the state of each crossing is determined so there
is a unique corresponding generator.
\end{proof}

\section{Functoriality}
Extension of Khovanov homology to link cobordisms and functoriality up to sign
was conjectured by Khovanov and established by Jacobsson \cite{Jac} and Khovanov \cite{Kh3}, and  Bar-Natan \cite{BN2}. The sign ambiguity was carried over by Clark-Morrison-Walker \cite{CMW}. In this section we show that our construction has a strictly functorial extension.

A  movie description of a cobordism is a generic projection in $[0,1]\times \R^2$ of a smooth surface
in $[0,1]\times \R^3$. Generically a movie decomposes into elementary ones
 which either
describe a Reidemeister move or glue an  handle to the surface.
 To each Reidemeister type movie we associate the corresponding homotopy equivalence,
and to each handle addition we associate the chain map associated with the corresponding TQFT map.
 We then associate to a movie the composition of these elementary chain maps.
\begin{thm}
a) Let $L\subset S^3$ be a link represented by diagrams $D$ and $D'$, then the homology isomorphism
associated to a sequence of Reidemeister moves is canonical, i. e. $Kh(L)$ is well defined.\\
b) Let $C\subset [0,1]\times \S^3$ be a smooth cobordism between the links $L$ and $L'$. The homology map
$Kh(L)\rightarrow Kh(L')$ induced by a movie description of $C$ only depends of the isotopy class of $C$
rel. $L\times\{0\}\cup L'\times\{1\}$, and $Kh$ extends to a functor on the embedded cobordism category.
\end{thm}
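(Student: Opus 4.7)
The plan is to reduce both statements to a finite verification against Carter--Saito's movie moves. Two generic projections into $[0,1]\times\R^2$ of isotopic surfaces in $[0,1]\times\R^3$ are related by a finite sequence of local modifications from the Carter--Saito list. Any movie decomposes into elementary clips, each of which is either a Reidemeister move or a Morse event (birth, death, saddle); Sections 3.3--3.6 already attach to every Reidemeister clip an explicit chain map (the equivalences $f, g$ together with the chain homotopies $D, D_0, D_1$), and to every Morse event we attach the chain map $\V(\Sigma)$ of the corresponding local trivalent cobordism. Composing these gives a chain map $\phi_M$ for each movie $M$, and statement (b) amounts to the assertion that on homology $\phi_M$ depends only on the isotopy class of the surface, i.e.\ that both sides of every Carter--Saito movie move induce chain-homotopic chain maps. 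Statement (a) then follows as the special case of the identity cylinder $L\times[0,1]$: two sequences of Reidemeister moves connecting $D$ and $D'$ both present this cylinder, so by (b) they induce the same isomorphism on $Kh$.

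First I would record that each elementary chain map is defined canonically at the complex level. The formulas for $f, g, D$ in Sections 3.3--3.6 are specific twisted TQFT maps, not abstract homotopy classes; the Morse events use the TQFT map of a fixed local trivalent cobordism; and the twisting by $\wedge^{d_s}\Delta_s$ already makes the Khovanov cube anticommute strictly, not just up to sign. Combined with Lemma \ref{lemabcd}a, which says that moving a point across a binding flips the sign in $\V$ (precisely the sign needed to compensate for the reordering of $1$-labelled pages at a vertex), this canonicity means that $\phi_M$ is literally a composition of definite chain maps before any move is checked.

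Next I would run through the Carter--Saito list in three batches. The reversible moves (reversal of time direction, cancellation of a Reidemeister move with its inverse, commutation of disjoint events) follow from the fact that our homotopy equivalences are strict inverses up to the homotopies already constructed, together with the strict commutativity of disjoint trivalent cobordisms in $\V$. The moves that interleave a Reidemeister III with another Reidemeister move reduce to the explicit isomorphism of reduced cube complexes at the end of Section 3.6 (via the substitution $c\mapsto d''$, $c'\mapsto d'$, $c''\mapsto d$ on twisting determinants), combined with naturality of the Gauss reduction. The remaining moves, in which births, deaths, and saddles interact with Reidemeister clips, reduce to the bubble, band, and tube relations of Lemma \ref{lemabcd}b--d applied locally around the relevant region.

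The main obstacle is sign coherence in those movie moves which produced the $\pm 1$ ambiguity in the original Khovanov theory, most notably the sphere movie and the mixed Reidemeister II/III movie. The oriented trivalent setup is designed precisely to remove these signs: the closed-surface invariant in Section 1.5 uses the signed trace $\epsilon_B(n)=-n$ on $B=\Z$, and the trivalent TQFT records the order of the two $1$-labelled pages at each binding. These two ingredients reduce the sphere movie to the value computed in the example of Section 1.5 ($0$ for a sphere with no or at least two points, $\pm 1$ depending on position for a single point), and the mixed Reidemeister movies to the signed tube and band relations of Lemma \ref{lemabcd}c--d. Once these checks are complete, (b) is established and (a) follows by specialising to the cylinder.
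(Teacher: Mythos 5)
Your reduction of both statements to a finite check of the Carter--Saito movie moves, and of a) to b) via the product cobordism, matches the paper's overall strategy (the paper phrases a) via loops in the $S^2$ of projection directions, homotoped through movie moves, but this is the same reduction to Lemma \ref{moviemoves}). Where you genuinely diverge is in how the movie-move lemma itself is established. You propose a direct case-by-case verification: computing both sides of every movie move explicitly from the chain maps of Sections 3.3--3.6 and the bubble/band/tube relations of Lemma \ref{lemabcd}, with the sign coherence checked by hand in the delicate cases. The paper instead uses a two-step indirect argument: first, Bar-Natan's simplicity argument gives \emph{projective} functoriality (the relevant spaces of degree-zero chain maps up to homotopy are rank one, so the two sides of any movie move agree up to a unit $\pm 1$ with no computation of the maps themselves); second, the sign is pinned down by passing to the degenerate Lee theory $Kh'$, where each link carries a canonical class (the all-positive colouring of Section 4), observing that elementary movies preserve this class and that the sign of a movie move is the same in the two theories. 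Your route is the one followed by Jacobsson and (in the corrected setting) by Clark--Morrison--Walker; it is self-contained but requires carrying out all fifteen verifications, which your proposal only organizes into batches rather than executes --- in particular the claim that the interesting moves ``reduce to'' the tube and band relations is exactly the content that would need to be written out. The paper's route buys a uniform treatment of all moves at the cost of importing the simplicity argument and the structure of the Lee deformation. Both are viable; be aware that under your approach the sign bookkeeping you flag as ``the main obstacle'' is the entire proof, whereas under the paper's approach it is absorbed once and for all by the canonical Lee class.
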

\begin{rem}
 Note that here we consider fixed links, and not links up to isotopy. It was observed by Jacobson that Khovanov homology
do have monodromy.
\end{rem}
\begin{proof}
 In a) we may consider the link in $\R^3$ as well. Then a diagram is associated with a generic projection on an oriented plane,
and is parametrized by a point in $S^2$. A generic loop in $S^2$ result in a finite sequence of Reidemeister
moves, and induces an homology isomorphism. An homotopy to the trivial loop can be obtained via elementary homotopies
corresponding to Reidemeister type movie moves. So statement a) results from Lemma \ref{moviemoves}.
 More generally movie moves generate isotopies of embedded surfaces whence staement b).
\end{proof}

\begin{lem}\label{moviemoves}
For each movie move the two induced homology maps coincide.
\end{lem}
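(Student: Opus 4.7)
The plan is to reduce the lemma to the Carter--Saito classification of movie moves, a finite list MM1--MM15 of local moves relating any two movie presentations of isotopic embedded surfaces (see \cite{CMW,BN2}). For each such move we must verify that the two induced chain maps agree up to chain homotopy; passing to homology then yields equality.

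I would group the moves by type. The \emph{reversible} moves (a Reidemeister move immediately undone) are handled by the constructions of the previous sections: the explicit homotopy equivalences $f$, $g$ together with the homotopy $D$ built for each of Reidemeister I, II$_+$, II$_-$, and III already satisfy $f\circ g=\mathrm{Id}$ and $g\circ f\simeq \mathrm{Id}$, so both compositions in such a movie move agree on the nose at the chain level. The \emph{exchange} moves --- two distant Reidemeister moves performed in either order, or a Reidemeister move that commutes past a Morse handle attachment --- then follow from naturality of the construction, since the local chain maps are supported in disjoint neighbourhoods; the sign introduced by reordering the relevant crossings in the exterior factor $\wedge^{d_s}\Delta_s$ depends only on the underlying permutation of crossings, which agrees on both sides.

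For the genuinely nontrivial moves --- those where Khovanov's original construction fails by a sign --- I would use a rigidity argument: the graded trivalent TQFT $\V$ together with the $q$-dimension formula for $\V(G)$ forces the relevant Hom space between the two complexes, in the fixed bidegree of the map in question, to be one-dimensional over $\Z$. Any two chain maps in that bidegree therefore differ by a single integer scalar, and it suffices to evaluate both sides on one well-chosen closed trivalent surface. That value is then computed using the band, bubble, tube and $4$-term relations of Lemmas \ref{lemabcd} and \ref{bigon}; the essential sign input comes from Lemma \ref{lemabcd}a, which changes sign whenever a point crosses a $2$-labelled binding, combined with the non-standard trace $\epsilon_C(n)=-n$ used in the definition of $\V$.

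The main obstacle is the triple-point move MM10 and its oriented variants, where three strands interact and the two chain maps involve a reordering of three crossings in the twisting factor $\wedge^{3}\Delta_s$. Here the antisymmetrization sign from $\langle\bullet,\ c\wedge c'\wedge c''\rangle$ must be cancelled against the geometric sign produced by pushing a $2$-labelled membrane through the triple-point region, and one has to check that the ordering conventions on $1$-labelled pages at each binding, propagated through the Reidemeister III chain homotopy of the previous subsection, give exactly this cancellation. The key point --- and the essential novelty over \cite{CMW} and \cite{CC} --- is that the ordering data fixed at each binding supplies this cancellation already over $\Z$, so no extension of scalars is required.
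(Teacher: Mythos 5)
Your first step --- reduction to the Carter--Saito list of movie moves and projective functoriality via a simplicity/rigidity argument --- is exactly the paper's first step (the paper invokes Bar-Natan's simplicity argument for this). Where you genuinely diverge is in how the residual sign is pinned down. The paper passes to the degenerate Lee theory $\V'$: for a given movie move the scalar relating the two maps is the same in $K$ and in $K'$, and in $K'$ each elementary cobordism map is checked to preserve the canonical class of $Kh'(L)$ attached to the all-positive colouring of the components; since that class is nonzero, the scalar is forced to be $+1$. You instead propose to determine the scalar by evaluating both chain maps against a well-chosen closed trivalent surface and computing with the relations of Lemmas \ref{lemabcd} and \ref{bigon}. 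This is a legitimately different route, closer in spirit to a direct verification of each move.

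The gap in your version is the nonvanishing of the evaluation. A rank-one Hom space tells you the two maps are $a\phi$ and $b\phi$ for a generator $\phi$; to conclude $a=b$ by pairing with a closed surface you must exhibit a closure on which $\phi$ evaluates to something nonzero, and for the problematic moves (where the complexes spread over several heights and the maps are only well defined up to homotopy) this is not automatic --- many natural closures evaluate to $0$ in the graded theory, since $V_\A(\Sigma_g)=0$ for $g\neq 1$. You would have to produce and verify such a closure separately for each of the roughly ten nontrivial moves together with their orientation and mirror variants, and in particular redo the MM10 sign bookkeeping you describe in each variant. The Lee-theoretic argument is designed precisely to avoid this: the canonical class is always nonzero and is preserved uniformly by every elementary map, so a single check per elementary cobordism type suffices. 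Your approach is viable in principle, but as written the key nondegeneracy step is asserted rather than proved, and the case analysis is not carried out.
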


\begin{proof}
 We first show projective functoriality, using a simplicity argument
of Bar-Natan. Then we consider the degenerate Lee theory. For a given
movie move, the sign is the same for the degenerate theory. We have seen that $Kh'(L)$ has a distinguished
element associated with the assignement of a positive sign on all components.
 We can see that the map associated with an elementary movie respects this canonical element.
\end{proof}

\end{document}